\newcommand{\tv}{\tilde{v}}
\newcommand{\tw}{\tilde{w}}
\newcommand{\vek}[2]{\begin{pmatrix}#1 \\ #2 \end{pmatrix}}
\newcommand{\dzeta}{\, \mathrm{d}\zeta}
\newcommand{\drho}{\,\mathrm{d}\rho}
\begin{document}
\section{Introduction}
Stochastic partial differential equations arise as a model in many scientific fields and the theory of numerical solutions to such equations is growing successfully. However, in applications it is often the case that particular examples such as stochastic reaction diffusion equations, even with very simple structure of the nonlinear term, fail to fit in the assumptions of the majority of publications in this research area. In this article, we study such an SPDE of reaction diffusion type
\begin{equation}\label{eq:SRDE}
\begin{split}
\dot{v}(t,\xi) &= \partial_\xi^2 v(t,\xi) + \phi_1 \big(\xi,v(t,\xi),w(t,\xi)\big) + b_1 \big(\xi,v(t,\xi),w(t,\xi) \big) \eta_1(t,\xi),\\
\dot{w}(t,\xi) &= \phi_2 \big(\xi, v(t,\xi), w(t,\xi) \big) + b_2 \big(\xi,v(t,\xi),w(t,\xi) \big) \eta_2(t,\xi),
\end{split}
\end{equation}
for $t \geq 0$, $\xi \in (0,1)$, where the nonlinear drift term for $v$ satisfies a one-sided Lipschitz condition only. Here, $\eta_j(t,\xi)$ are some Gaussian noise processes to be precisely defined later. This system consists of two variables $v$ and $w$, where the former satisfies some semi-linear stochastic evolution equation and the latter is a kind of control variable coupled to $v$ in an equation without diffusion but global Lipschitz condition on the coefficients. The motivation for such a system comes from the field of neurobiology. In particular, we consider the spatially extended stochastic FitzHugh-Nagumo system, modeling the propagation of the action potential in the axon of a neuron; see e.\,g. \cite{Ermentrout,Keener} and Section 6 for details.

The purpose of this article is to prove strong convergence, i.\,e., convergence in the $p$th mean, including explicit error estimates of a spatial approximation scheme, both easy to implement and widely used in applied sciences. We consider the well-known finite difference method, which has been studied by many authors, but to the best of our knowledge there is no global convergence result with explicit rates if the nonlinear drift term is not Lipschitz continuous. Crucial references in the literature studying similar equations to \eqref{eq:SRDE} (always with $w=0$) are Gy\"ongy \cite{Gyongy99}, where strong convergence with rates and uniform convergence in probability without a rate is proven, assuming a global Lipschitz condition on $\phi_1$ and $b_1$ and no global Lipschitz condition on $\phi_1$ but constant $b_1$, respectively. Also see Pettersson and Signahl \cite{Pettersson05} for a similar result with multiplicative noise. We also want to mention Shardlow \cite{Shardlow99} for strong convergence with rates 
under global Lipschitz condition on $\phi_1$ and additive space-time white noise, as well as Hausenblas \cite{Hausenblas02} for similar results in abstract Banach spaces. A further reference is a series of articles by Gy\"ongy and Millet \cite{GyongyMillet05, GyongyMillet09}, where the authors work in the variational approach and incorporate various spatial approximation schemes for SPDEs driven by a finite dimensional noise. However, equation \eqref{eq:SRDE} is beyond their results since it excludes polynomially growing nonlinearities. On the other hand, it is worth mentioning that using the mathematically more elegant spectral Galerkin approach there are existing results which incorporate our one-sided Lipschitz assumption. We refer to Lord and Rougement \cite{LordRougement}, Jentzen \cite{Jentzen}, and Liu \cite{GinzburgLandau} for strong and pathwise convergence results with explicit rates. Generally speaking, there are only a few references concerning the numerical approximation of nonlinear SPDEs without a global Lipschitz condition and as further examples we include \cite{CarelliProhl} for some local convergence convergence results for 2D stochastic Navier-Stokes equations and \cite{Shardlow05} for applications in the field of neurobiology studying similar equations.

The SPDE \eqref{eq:SRDE} is studied within the variational approach, see e.\,g., \cite{prevot}, and we only use the a priori known regularity of the exact solution to prove strong $p$th mean convergence with an implicitly given rate in Theorem \ref{MainTheorem}. This rate is given in terms of the regularity of the exact solution. If there is better a priori information on the exact solution, this rate can be made explicit in terms of the approximation parameter. The proof is essentially based on uniform exponential a priori estimates for the approximating solutions obtained in Propositions \ref{propAPriori} and \ref{prop:wn} as well as It\^o's formula for the variational solution, in contrast to the popular mild solution approach used in most of the references mentioned above.

Also, most of them concern fully discrete schemes, i.\,e., a discretization of the space and time domain. We only consider the spatial approximation part, which reduces the infinite dimensional problem to an SDE on a finite dimensional space, and then has to be solved with existing theory and the errors accumulate. Nevertheless, let us make a brief comment on the time discretization. One of the most popular approaches is the Euler-Maruyama scheme because of its simplicity and low computational effort. However, on the one hand, the finite difference approximation of \eqref{eq:SRDE} is always a problem of stiff character, thus suggesting the use of time-implicit solvers. On the other hand, considering the nonlinear term for itself, it is known that the Euler-Maruyama scheme converges to the exact solution of the SDE for equations with globally Lipschitz continuous drift and diffusion coefficients. Of course, also our approximated equation fails to satisfy such a condition. For such equations with super-linearly growing drift and/or diffusion it was shown recently by Hutzenthaler, Jentzen, and Kloeden \cite{HJK2011} that the Euler-Maruyama approximations diverge in a strong and weak sense. An obvious remedy for this problem is to use, e.\,g., the implicit Euler scheme already suggested for the linear term, which is known to converge; however, it requires more computational effort than the explicit scheme. Again Hutzenthaler, Jentzen, and Kloeden \cite{HJK2012} suggest a so-called tamed Euler scheme that is explicit and computationally less expensive. It is also observed that the runtime as a function of the dimension grows linearly and quadratic for the tamed Euler and implicit Euler scheme, respectively. Of course, this favors the tamed Euler scheme for our high dimensional approximating problem. Thus, we propose solving \eqref{eq:SRDE} with a combination of finite differences and a semi-implicit tamed Euler scheme.

The article is structured as follows. In the next section, we describe the precise setting and assumptions on the coefficients of \eqref{eq:SRDE} and state the existence and uniqueness result for its corresponding abstract stochastic evolution equation in the sense of variational solutions. In Section 3 we introduce the approximation scheme and state the main result in Theorem \ref{MainTheorem}. Sections 4 and 5 contain the proof of this theorem. As an application of our results we consider in Section 6 a spatially extended FitzHugh-Nagumo system with noise studied by Tuckwell \cite{TuckwellFHN} from a more applied point of view. In this article, the impact of noise on the generation of action potentials and the reliability of faithful signal transmission was studied. The obtained results depend on the chosen numerical scheme and Theorem \ref{MainTheorem} now allows us to explicitly quantify the approximation error of the finite difference approximation used therein. As an illustration for the use of our result,
 we state an estimator for the probability of propagation failure based on numerical observations as well as confidence intervals for this estimator. These depend on the statistical Monte-Carlo and the numerical approximation error and are explicitly constructed.

\section{Mathematical Setting and Assumptions}

Let $(H, \norm{\cdot}_H) = L^2(0,1)$ with scalar product denoted by $\scp{\cdot}{\cdot}_H$ and consider the Laplacian as a linear operator on $H$, $A: D(A) \subset H \to H$, i.\,e.,
\[
\big(A v\big)(\xi) \df \partial_\xi^2 v(\xi)
\]
equipped with (homogeneous) Neumann boundary conditions in $0$ and $1$. Corresponding to this, recall the definition of the fractional Sobolev spaces $H^\theta(0,1)$ as $D\big( (-A)^{\frac{\theta}{2}} \big)$ or for $\theta > 0$ with the equivalent Sobolev-Slobodeckij norm (cf. \cite{TriebelAlt})
\[
\norm{u}_{H^\theta} \df \sum_{s=0}^{\lfloor \theta \rfloor} \norm{\partial_\xi^s u}_H + \Bigg( \iint_0^1 \frac{\abs{\partial_\xi^{\lfloor \theta \rfloor} u(\xi) - \partial_\xi^{\lfloor \theta \rfloor} u(\zeta)}^2}{\abs{\xi-\zeta}^{2(\theta - \lfloor \theta \rfloor)+1}} \dzeta \dxi \Bigg)^{\frac12}.
\]
Define, in particular, the space $(V, \norm{\cdot}_V)$ as $H^1$ with norm $\norm{u}_V^2 \df \int_0^1 \abs{\partial_\xi u(\xi)}^2 \dxi$ and denote by $V^\ast$ its topological dual, thus we have the Gelfand triplet $V \hookrightarrow H \hookrightarrow V^\ast$ with continuous and dense embeddings. As auxiliary spaces we define the product spaces $\mathcal{V} \df V \times H$, $\mathcal{H} \df H \times H$ with $\mathcal{V} \hookrightarrow \mathcal{H} \hookrightarrow \mathcal{V}^\ast$ continuously and densely. In the following, we define the abstract drift and diffusion operators mapping $\mathcal{V}$ to $\mathcal{V}^\ast$.

It is well known that $A$ can be uniquely extended to $A: V \to V^\ast$ via an integration by parts
\[
\dualp{Au}{v} = \int_0^1 \partial^2_\xi u(\xi) v(\xi) \dxi = - \int_0^1 \partial_\xi u(\xi) \partial_\xi v(\xi) \dxi \leq \norm{u}_V \norm{v}_V.
\]
The functions $\phi_1, \phi_2: [0,1] \times \R \times \R \to \R$ are continuous and satisfy further conditions specified below in detail. More or less, $\phi_1$ is one-sided Lipschitz in the variable $v$, the dependence on the control variable $w$ as well as $\phi_2$ are assumed to be globally Lipschitz. Moreover, some recurrent behavior is required.
\begin{assum}\label{assum:Nonlinear}
There exist constants $L, \beta, \gamma > 0$ and $1 < m \leq 3$ such that
\begin{align}
&2\vek{\phi_1(\xi_1,v_1,w_2) - \phi_1(\xi_2,v_2,w_2)}{\phi_2(\xi_1,v_1,w_1) - \phi_2(\xi_2,v_2,w_2)} {\kern -0.3em} \vek{v_1-v_2}{w_1-w_2} \leq L \big( \abs{v_1-v_2}^2 + \abs{w_1-w_2}^2 \big), \tag{A1}\label{A1}\\
&2\vek{\phi_1(\xi,v,w)}{\phi_2(\xi,v,w)}{\kern -0.3em}\vek{v}{w} \leq - \beta \abs{w}^2 + L \abs{v}^2 - \gamma \abs{v}^{m+1}, \tag{A2}\label{A2}\\
\intertext{and, moreover,}
\begin{split}
&{\kern 0.5em}\abs{\phi_1 \big(\xi_1,v_1,w_1 \big) - \phi_1 \big(\xi_2,v_2,w_2\big)} \leq  L \abs{w_1-w_2}\\
&\qquad+  L \big( 1 + \abs{v_1}^{m-1} + \abs{v_2}^{m-1} \big)\big( \abs{\xi_1-\xi_2} + \abs{v_1-v_2}\big),
\end{split}\tag{A3}\label{A3}\\
&{\kern 0.5em}\abs{\phi_2 \big(\xi_1,v_1,w_1 \big) - \phi_2 \big(\xi_2,v_2,w_2\big)} \leq L \big( \abs{\xi_1 - \xi_2} + \abs{v_1-v_2} + \abs{w_1-w_2} \big),\tag{A4}\label{A4}
\end{align}
for all $\xi,\xi_1,\xi_2 \in [0,1]$, $v,v_1,v_2, w,w_1,w_2 \in \R$.
\end{assum}
A typical form of $\phi_1$ is a polynomial in $v$ of odd degree and negative leading coefficient together with a linear perturbation in $w$, for example, $\phi_1(v,w) = v - v^3 - w$ similarly as in the FitzHugh-Nagumo system described in Section 6. With these assumptions we can define the Nemytskii operator $\Phi: \mathcal{V} \to \mathcal{V}^\ast$ by
\[
\big(\Phi(v,w)\big)(\xi) \df \vek{\phi_1\big(\xi, v(\xi),w(\xi)\big)}{\phi_2\big(\xi, v(\xi),w(\xi)\big)}
\]
and estimate for $v_1,v_2 \in V$, $w_1,w_2 \in H$ with the embedding $V \hookrightarrow L^\infty(0,1)$ as follows:
\[
\phantom{\,}_{\mathcal{V}^\ast} \scp{\Phi(v_1,w_1)}{(v_2,w_2)}_{\mathcal{V}} \leq C \Big( \norm{v_1}_V^m \norm{v_2}_V + \big(1 + \norm{w_1}_H + \norm{v_1}_H\big) \big(\norm{v_2}_H +\norm{w_2}_H\big) \Big).
\]
Now that the assumptions on the drift of equation \eqref{eq:SRDE} are stated, let us focus on the noise process $\eta = (\eta_1, \eta_2)$, formally given as the derivative of some $Q_j$-Wiener processes $\sqrt{Q_j} W_j(t)$, where $(W_j(t))_{t \geq 0}$, $j=1,2$ are two independent cylindrical Wiener processes on $H$ with respect to an underlying probability space $(\Omega, \algF, \algF_t, \PP )$. In order to obtain a solution to \eqref{eq:SRDE} we cannot treat the case of space-time white noise in neither variable, i.\,e., $Q_j=I$, but only the case of colored noise with nuclear covariance operators. This is inevitable in the variational framework used herein (instead of the also widely used mild formulation) and gives us It\^o's formula for the square of the $\mathcal{H}$-norm as a major tool. The assumptions on $Q_j$ and $b_j$ are combined below.
\begin{assum}\label{assum:Noise}
Let $\theta_1 > 1/2$, $\theta_2 \geq 1$ and $Q_j \in L(H)$, $j=1,2$ be symmetric and positive definite. Assume that $\tr (-A)^{\theta_j} Q_j < \infty$, in particular, $Q_j$ admits an integral kernel of the form
\begin{equation}
\big(Q_j u\big)(\xi) = \iint_0^1  q_j(\xi,\zeta) q_j(\rho,\zeta) u(\rho) \dzeta \drho, \quad q_j \in H^{\theta_j} \big((0,1)^2\big). \tag{A5}\label{A5}
\end{equation}
In the case of $Q_2$ we assume furthermore that $q_2 \in L^\infty((0,1)^2)$. The scalar valued noise intensities $b_j: [0,1] \times \R \times \R \to \R$ are supposed to be bounded and Lipschitz continuous, in particular, $\abs{b_j(\xi,v,w)} \leq 1$ and
\begin{equation}
\abs{b_j(\xi_1,v_1,w_1) - b_j(\xi_2,v_2,w_2)} \leq \big( \abs{\xi_1-\xi_2} + \abs{v_1-v_2} + \abs{w_1-w_2}\big) \tag{A6}\label{A6}
\end{equation}
for all $\xi,\xi_1,\xi_2 \in [0,1]$ and $v,v_1,v_2, w, w_1,w_2 \in \R$.
\end{assum}
\begin{rem}
The stronger assumptions for $Q_2$ are due to the less regularizing drift of $w$ and are necessary for the derivation of sufficiently strong a priori estimates of the approximating solutions.
\end{rem}
In the same manner as for the nonlinear drift, we define the Nemytskii operators for the noise as $B_j(v,w)(\xi) \df b_j (\xi,v(\xi),w(\xi))$, $j=1,2$. Furthermore, let $x = (v,w)^T \in \mathcal{V}$ and
\[
\mathcal{A}(x) \df \vek{Av}{0}+ \Phi (v,w), \quad \mathcal{B}(x) \df \vek{B_1(v,w) \sqrt{Q_1}}{B_2(v,w) \sqrt{Q_2}}, \quad W(t) \df \vek{W_1(t)}{W_2(t)}.
\]
Then, the stochastic evolution equation for $X(t) \df \big(v(t), w(t)\big)^T$ corresponding to equation \eqref{eq:SRDE} is given by
\begin{equation}\label{eq:EvoEq}
\mathrm{d} X(t) = \mathcal{A}\big(X(t)\big) \dt + \mathcal{B}\big(X(t)\big) \dwt, \quad X(0) = x_0.
\end{equation}
Existence and uniqueness of a solution for such an equation can be studied in the variational framework (see for example \cite{prevot}), in particular, a recent extension in \cite{LiuLocallyMonotone}. The following theorem is a corollary to \cite[Theorem 1.1]{LiuLocallyMonotone}.
\begin{thm}\label{thm:Existenz}
Suppose $T>0$, $x_0 \in \mathcal{L}^{p}(\Omega, \algF, \PP;H)$, $p \geq 2m$, and Assumptions \ref{assum:Nonlinear} and \ref{assum:Noise} are satisfied. Then, equation \eqref{eq:EvoEq} has a unique variational solution $(X(t))_{t \in [0,T]}$ which satisfies
\[
\EV{\sup_{t \in [0,T]} \norm{X(t)}_\mathcal{H}^p + \int_0^T \norm{X(t)}_\mathcal{V}^2 \dt} < \infty.
\] 
\end{thm}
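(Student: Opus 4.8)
The plan is to realize \eqref{eq:EvoEq} as an abstract stochastic evolution equation in the Gelfand triple $\mathcal{V} \hookrightarrow \mathcal{H} \hookrightarrow \mathcal{V}^\ast$ and to verify that the pair $(\mathcal{A}, \mathcal{B})$ satisfies the structural hypotheses of the locally monotone variational framework --- hemicontinuity, local monotonicity, coercivity with some exponent $\alpha > 1$, and the subordinate growth bound on $\mathcal{A}$ --- so that \cite[Theorem~1.1]{LiuLocallyMonotone} applies and yields a unique variational solution; the exponent will be $\alpha = 2$, the Laplacian being the only dissipative term. The claimed $2m$-th moment bound is then extracted separately from the It\^o formula. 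Throughout one uses the one-dimensional Sobolev embeddings $V \hookrightarrow L^\infty(0,1)$ and, since $\theta > \tfrac12$, $H^\theta(0,1) \hookrightarrow C([0,1])$. Hemicontinuity of $\lambda \mapsto \phantom{\,}_{\mathcal{V}^\ast}\scp{\mathcal{A}(x + \lambda y)}{z}_{\mathcal{V}}$ is immediate for the linear part $v \mapsto Av$ and follows for the Nemytskii part $\Phi$ by dominated convergence, the continuity of $\phi_1, \phi_2$ giving pointwise convergence and \eqref{A3}, \eqref{A4} together with $V \hookrightarrow L^\infty(0,1)$ furnishing an $L^1$-majorant, uniform for $\lambda$ in a bounded interval.

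For the monotonicity, note first that $\Phi(x_i) \in \mathcal{H}$ for $x_i = (v_i,w_i) \in \mathcal{V}$, so the pairing reduces to an integral over $(0,1)$ and the pointwise one-sided Lipschitz estimate \eqref{A1} (applied with $\xi_1 = \xi_2$) yields $\phantom{\,}_{\mathcal{V}^\ast}\scp{\Phi(x_1) - \Phi(x_2)}{x_1 - x_2}_{\mathcal{V}} \leq L \norm{x_1 - x_2}_{\mathcal{H}}^2$. The Laplacian contributes $\dualp{A(v_1-v_2)}{v_1-v_2} = -\norm{v_1-v_2}_V^2 \leq 0$, and the Lipschitz bound \eqref{A8} together with boundedness of the pointwise variance $\xi \mapsto \sum_k \abs{\sqrt{Q}e_k(\xi)}^2$ --- which follows from $\tr (-A)^\theta Q < \infty$ and $H^\theta(0,1) \hookrightarrow C([0,1])$ --- gives $\norm{\mathcal{B}(x_1) - \mathcal{B}(x_2)}_{L_2}^2 \leq C\norm{v_1-v_2}_H^2$. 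Summing these three contributions,
\[
2\, \phantom{\,}_{\mathcal{V}^\ast}\scp{\mathcal{A}(x_1) - \mathcal{A}(x_2)}{x_1-x_2}_{\mathcal{V}} + \norm{\mathcal{B}(x_1) - \mathcal{B}(x_2)}_{L_2}^2 \leq C\norm{x_1-x_2}_{\mathcal{H}}^2,
\]
so the coefficients are in fact globally monotone; the locally monotone framework is nevertheless needed because of the superlinear growth of $\Phi$.

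Coercivity is obtained from $\dualp{Av}{v} = -\norm{v}_V^2$, from the dissipativity \eqref{A2}, which gives $\phantom{\,}_{\mathcal{V}^\ast}\scp{\Phi(x)}{x}_{\mathcal{V}} \leq \beta\norm{w}_H^2 + L\norm{v}_H^2 - \gamma\norm{v}_{L^{m+1}}^{m+1}$, and from $\norm{\mathcal{B}(x)}_{L_2}^2 \leq \tr Q$ by \eqref{A8}; discarding the nonpositive $L^{m+1}$-term one gets
\[
2\, \phantom{\,}_{\mathcal{V}^\ast}\scp{\mathcal{A}(x)}{x}_{\mathcal{V}} + \norm{\mathcal{B}(x)}_{L_2}^2 \leq C + C\norm{x}_{\mathcal{H}}^2 - \norm{x}_{\mathcal{V}}^2.
\]
For the growth bound, \eqref{A3}, \eqref{A4} and $\mathcal{H} \hookrightarrow \mathcal{V}^\ast$ give $\norm{\Phi(x)}_{\mathcal{V}^\ast} \leq C\big(1 + \norm{v}_{L^{2m}}^m + \norm{w}_H + \norm{v}_H\big)$, and combining $\norm{Av}_{\mathcal{V}^\ast} \leq \norm{v}_V$ with the one-dimensional Gagliardo--Nirenberg inequality $\norm{v}_{L^{2m}} \leq C\norm{v}_V^{(m-1)/(2m)} \norm{v}_H^{(m+1)/(2m)}$ and the estimate $\norm{v}_V^{m-1} \leq C(1 + \norm{v}_V^2)$ --- valid since $m \leq 3$ --- one obtains, after squaring,
\[
\norm{\mathcal{A}(x)}_{\mathcal{V}^\ast}^2 \leq C\big(1 + \norm{x}_{\mathcal{V}}^2\big)\big(1 + \norm{x}_{\mathcal{H}}^{m+1}\big),
\]
the required subordinate growth bound with $\alpha = 2$ and $\beta = m+1$. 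At this point \cite[Theorem~1.1]{LiuLocallyMonotone} applies and delivers a unique variational solution with $\EV{\sup_{t \in [0,T]} \norm{X(t)}_{\mathcal{H}}^2 + \int_0^T \norm{X(t)}_{\mathcal{V}}^2 \dt} < \infty$.

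It remains to upgrade the second moment to the $2m$-th moment. Applying the It\^o formula to $t \mapsto \norm{X(t)}_{\mathcal{H}}^{2m}$ along a localizing sequence of stopping times, the drift term gives a multiple of $\norm{X(t)}_{\mathcal{H}}^{2m-2}\big(2\, \phantom{\,}_{\mathcal{V}^\ast}\scp{\mathcal{A}(X(t))}{X(t)}_{\mathcal{V}} + \norm{\mathcal{B}(X(t))}_{L_2}^2\big)$, which by the coercivity estimate above is bounded by $C\norm{X(t)}_{\mathcal{H}}^{2m} + C$; the martingale term is controlled by the Burkholder--Davis--Gundy inequality using $\norm{\mathcal{B}(X(t))}_{L_2}^2 \leq \tr Q$; and Gronwall's lemma together with $x_0 \in L^{2m}(\Omega, \algF, \PP; H)$ closes the estimate after passing to the limit in the localization. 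I expect the main obstacle to be the subordinate growth bound: estimating $\Phi(x)$ through $V \hookrightarrow L^\infty(0,1)$ alone produces the power $\norm{v}_V^{2m-2}$, which for $m > 2$ has order strictly higher than $\norm{x}_{\mathcal{V}}^2$ and is incompatible with the growth condition, so one is forced to interpolate with Gagliardo--Nirenberg in order to trade a power of the $V$-norm for a power of the $H$-norm --- and this is precisely where the standing restriction $m \leq 3$ is used. A secondary point is the passage from the abstract assumption $\tr (-A)^\theta Q < \infty$ to the pointwise bound on $\sum_k \abs{\sqrt{Q}e_k(\xi)}^2$, which is what keeps the noise contributions in the monotonicity and coercivity estimates controllable by the $H$-norm rather than the $V$-norm.
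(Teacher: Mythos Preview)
Your proof is correct and follows essentially the same route as the paper: verify the hemicontinuity, local monotonicity, coercivity, and growth conditions (H1)--(H4) of \cite{LiuLocallyMonotone} and invoke Theorem~1.1 there. Two minor differences in execution: for the growth bound (H4) the paper does not use Gagliardo--Nirenberg but instead estimates the dual pairing directly, applying $V \hookrightarrow L^\infty$ to pull out one factor $\|v\|_V$ and Cauchy--Schwarz on the rest to arrive at $\norm{\mathcal{A}(x)}_{\mathcal{V}^\ast}^2 \leq C(1+\norm{x}_{\mathcal{V}}^2)(1+\norm{x}_{\mathcal{H}}^{2m-2})$, so your remark that one is \emph{forced} to interpolate is a little too strong; and the paper reads the $2m$-th moment bound directly off \cite[Theorem~1.1]{LiuLocallyMonotone} rather than redoing the It\^o--BDG--Gronwall argument by hand, though your explicit derivation is of course fine.
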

\begin{proof}
We have to verify the conditions (H1)--(H4) in \cite{LiuLocallyMonotone}. Of course, for $x_1,x_2,x_3 \in \mathcal{V}$ the map $s \mapsto \phantom{\,}_{\mathcal{V}^\ast} \scp{\mathcal{A}(x_1+sx_2)}{x_3}_\mathcal{V}$ is continuous on $\R$, hence (H1) holds. Furthermore, we can easily obtain the monotonicity of $\mathcal{A}$ by the one-sided Lipschitz condition \eqref{A1}
\[
2 \phantom{\,}_{\mathcal{V}^\ast} \scp{\mathcal{A}(x_1) - \mathcal{A}(x_2)}{x_1-x_2}_\mathcal{V} \leq -2 \norm{v_1-v_2} + 2 L \norm{x_1-x_2}_\mathcal{H}^2.
\]
Moreover, $\mathcal{B}$ is Lipschitz continuous in the Hilbert-Schmidt norm $\norm{\cdot}_{L_2}$ since
\begin{equation}\label{LipschitzB}
\begin{split}
\norm{\mathcal{B}(x_1) - \mathcal{B}(x_2)}_{L_2(\mathcal{H})} &\leq \sum_j \norm{B_j(v_1,w_1) - B_j(v_2,w_2)}_{L(H^{\theta_j},H)} \norm{\sqrt{Q_j}}_{L_2(H,H^{\theta_j})}\\
& \leq c_{\theta_j} \norm{x_1-x_2}_{\mathcal{H}} \sum_j \big(\tr (-A)^{\theta_j} Q_j\big)^{\frac12}.
\end{split}
\end{equation}
The latter inequality holds because of the embedding $H^\theta(0,1) \hookrightarrow L^\infty(0,1)$ for $\theta > \frac12$ with constant $c_\theta$. Thus, we have verified (H2) with $\rho \equiv 0$. \eqref{A2} together with the fact that $\norm{B(x)}_{L(H)} \leq 1$ implies
\[
2 \phantom{\,}_{\mathcal{V}^\ast}\scp{\mathcal{A}(x)}{x}_\mathcal{V} + \norm{\mathcal{B}(x)}_{HS}^2 \leq -2 \norm{x}_\mathcal{V}^2 + (L+2) \norm{x}_\mathcal{H}^2 + \tr Q
\]
for all $x \in \mathcal{V}$ and (H2) holds with $\alpha=2$. With \eqref{A3} and \eqref{A4} the matching growth condition is
\begin{align*}
&\norm{\mathcal{A}(x)}_{\mathcal{V}^\ast} \leq \norm{x}_\mathcal{V} + {\kern -0.7em} \sup_{\norm{u}_V=1} \int_0^1 \abs{\phi_1 (v,w) u}(\xi) \dxi + {\kern -0.7em}\sup_{\norm{u}_H=1} \int_0^1 \abs{\phi_2 ( v,w) u}(\xi) \dxi\\
&\quad\leq \norm{x}_\mathcal{V} + C {\kern -0.7em}\sup_{\norm{u}_V=1} \int_0^1 \abs{v(\xi)}^m \abs{u(\xi)}\dxi + C {\kern -0.7em}\sup_{\norm{u}_H=1} \int_0^1 \big(1 + \abs{w(\xi)} + \abs{v(\xi)} \big) \abs{u(\xi)}\dxi \\
&\quad\leq \norm{x}_\mathcal{V}^2 + c_1 C \norm{v}_V \Big(\int_0^1 \abs{v(\xi)}^{2m-2} \dxi\Big)^\frac12 + C \big(1 + \norm{w}_H + \norm{v}_H \big),
\end{align*}
where we used the embedding $V \hookrightarrow L^\infty(0,1)$. The latter is finite if $m \leq 3$, hence
\[
\norm{\mathcal{A}(x)}_{\mathcal{V}^\ast}^2 \leq C \big(1 + \norm{x}_\mathcal{V}^2 \big) \big( 1 + \norm{x}_\mathcal{H}^{2m-2}\big)
\]
and condition (H4) in \cite{LiuLocallyMonotone} holds. This condition determines the lower bound $p\geq (2m-2) + 2 = 2m$ for the integrability of the initial condition $x_0$.
\end{proof}

\section{The Finite Differences Scheme and the Main Result}

In this section we briefly describe the approximation scheme and state the main convergence result in Theorem \ref{MainTheorem}. Equation \eqref{eq:EvoEq} is spatially approximated using an equidistant grid given by $\tfrac1n \{ 0, \dots, n \}$, approximating the domain $(0,1)$, and the vectors $v^n, w^n \in \R^{n+1}$ denote the functions $v$ and $w$ evaluated on this grid. Furthermore, we introduce the spaces $V_n \cong \R^{n+1}$ with the discrete boundary conditions $u_1-u_0 = u_n - u_{n-1} = 0$ corresponding to the homogeneous Neumann boundary. We equip $V_n$ with the semi-norm $\norm{v}_n^2 \df n \sum_{k=1}^n (v_k - v_{k-1})^2$. The finite difference approximation $A^n$ of $A$ is just the discrete Laplacian, given by
\[
(A^n v)_k \df n^2 (v_{k+1} - 2 v_k + v_{k-1}), \quad 1 \leq k \leq n-1, \quad v \in V_n
\]
together with the appropriate boundary values $(A^n v)_0 = n^2(v_1-v_0)$, $(A^nv)_n = n^2(v_n-v_{n-1})$ which are $0$ for $v \in V^n$. This allows to imitate the variational approach in the discrete setting since a summation by parts formula holds, i.\,e.
\begin{equation}\label{SBP}
\tfrac{1}{n} \sum_{k=0}^n \big(A^n v\big)_k u_k = - n \sum_{k=1}^n (v_k - v_{k-1}) ( u_k - u_{k-1})
\end{equation}
for all $v \in V^n$, $u \in \R^{n+1}$.  The nonlinearities are simply evaluated pointwise, i.\,e.,
\[
\phi^n_j(v,w) \df \Big( \phi_j \big(\tfrac{k}{n}, v_k, w_k \big)\Big)_{0 \leq k \leq n} \quad\text{and}\quad b_j^n(v) \df \operatorname{diag} \Big( b_j \big(\tfrac{k}{n}, v_k, w_k \big)\Big)_{0 \leq k \leq n}
\]
for $v,w \in V_n$, $j=1,2$ where by $\operatorname{diag} v$ we denote the diagonal matrix with the entries of the vector $v$ on the main diagonal. In the next step let us construct the approximating noise in terms of the given realization of the driving cylindrical Wiener processes $(W_j(t))_{t \geq 0}$, $j=1,2$. Recall that
\begin{equation}\label{eq:BM}
\scp{W_j(t)}{\sqrt{n} 1_{[\frac{k-1}{n}, \frac{k}{n})}}_H \fd \beta_{j,k}(t), \quad 1 \leq k \leq n
\end{equation}
defines a family of $2n$ iid real valued Brownian motions. The spatial covariance structure given by the kernels $q_j$ is discretized as
\[
q^n_{j,k,l} \df n^2 \int_{\frac{k-1}{n}}^{\frac{k}{n}}\int_{\frac{l-1}{n}}^{\frac{l}{n}}q_j (\xi, \zeta) \dzeta \dxi, \quad 1 \leq k,l \leq n,
\]
together with $q^n_{j,0,l} \df 0$, $1\leq l \leq n$. With this discrete covariance matrix we can replace $q$ by a piecewise constant kernel, i.\,e., for $ 1 \leq k \leq n$ and $\xi \in \big[\frac{k-1}{n},\frac{k}{n}\big)$,
\begin{align*}
&\int_0^1 q_j (\xi, \zeta) W_j(t,\zeta) \dzeta \;\rightsquigarrow \; \sum_{l=1}^n q^n_{j,k,l} \int_0^1 1_{[\frac{l-1}{n}, \frac{l}{n})}(\zeta) W_j(t,\zeta) \dzeta\\
&\quad=\tfrac{1}{\sqrt{n}} \sum_{l=1}^n q^n_{j,k,l} \scp{W_j(t)}{\sqrt{n} 1_{[\frac{l-1}{n}, \frac{l}{n})}} = \tfrac{1}{\sqrt{n}} \sum_{l=1}^n q^n_{j,k,l} \beta_{j,l}(t) \fd \Big(\sqrt{Q_j^n} P_n W_j(t)\Big)_k,
\end{align*}
where $P_n h = (\scp{h}{\sqrt{n}1_{[\frac{l-1}{n}, \frac{l}{n})}})_{1 \leq l \leq n}$. Denote by $W_j^n(t) = P_n W_j(t)$ the resulting $n$-dimensional Brownian motions and the finite dimensional system of stochastic differential equations approximating equation \eqref{eq:EvoEq} or rather \eqref{eq:SRDE} is
\begin{equation}\label{eq:Approximation}
\begin{split}
\mathrm{d} v^n(t) &= \Big[ A^n v^n(t) + \phi_1^n \big(v^n(t),w^n(t)\big) \Big] \dt + b_1^n\big(v^n(t), w^n(t)\big) \sqrt{Q_1^n} \, \mathrm{d} W_1^n(t),\\
\mathrm{d} w^n(t) &= \Big[ \phi^n_2 \big(v^n(t),w^n(t)\big) \Big] \dt + b_2^n\big(v^n(t), w^n(t)\big) \sqrt{Q_2^n} \, \mathrm{d} W_2^n(t).
\end{split}
\end{equation}
Standard results on stochastic differential equations imply the existence of a unique strong solution $(v^n(t),w^n(t))$ to \eqref{eq:Approximation}; see e.\,g., \cite[Chapter 3]{prevot}. In order to formulate the convergence result, we embed the processes $v^n$ and $w^n$ into the space $C([0,T],V)$ by linear interpolation with respect to the space variable $\xi$. Given $v \in V_n$, define
\[
\tv (\xi) \df (n\xi - k + 1) v_k + (k -n\xi) v_{k-1}, \quad \xi \in \big[ \tfrac{k-1}{n},\tfrac{k}{n} \big].
\]
Denote by $\iota_n: V_n \to V; v \mapsto \tv$ the embedding given above. Furthermore, define
\[
\tilde{X}^n(t) \df \vek{\tv^n(t)}{\tw^n(t)} \quad \text{and} \quad E^n(t) \df X(t) - \tilde{X}^n(t).
\]
\vspace{-\baselineskip}
\begin{thm}\label{MainTheorem}
Suppose Assumptions \ref{assum:Nonlinear} and \ref{assum:Noise} hold, $v_0 \in \mathcal{L}^\infty (\Omega,\algF,\PP; C([0,1]))$ and $w_0 \in \mathcal{L}^\infty (\Omega,\algF,\PP; C^1([0,1]))$. Then for every $1\leq p < \infty$ there exists a finite constant $C=C(v_0,w_0,p,T,Q,L,\beta, \gamma)$ such that
\begin{align*}
\EV{\sup_{t \in [0,T]} \norm{E^n(t)}_\mathcal{H}^p}^{\frac{1}{p}} &\leq C\Bigg( \EV{\norm{E^n(0)}_{\mathcal{H}}^{2p}} + \EV{ \Big( \int_0^T I_n\big(v(t)\big)^2 \dt\Big)^p}^\frac12 \Bigg)^{\frac{1}{2p}}\\
\intertext{which converges to $0$ as $n \to \infty$. Denote by $v'(t,\xi) = \partial_\xi v(t,\xi)$ the first derivative with respect to $\xi$, then the rate of convergence is implicitly given by the expression}
I_n(v(t)) &\df \Big( \sum_{k=1}^n \int_{\frac{k-1}{n}}^{\frac{k}{n}} \Big(v'(t,\xi) -  v' \big(t,\xi \pm \tfrac{1}{n}\big) \Big)^2 \dxi \Big)^\frac12.\\
\intertext{If, in addition, $v \in X_\alpha \df \mathcal{L}^{p^\ast}( \Omega, \algF, \PP; L^2([0,T], H^{1+\alpha}))$ for some $\alpha > 0$ and $1<p^\ast<\infty$, then the rate of convergence is explicitly given by}
\EV{ \sup_{t \in [0,T]} \norm{E^n(t)}_{\mathcal{H}}^p}^\frac{1}{p} &\leq C_{\ref{MainTheorem}} \Bigg(\EV{\norm{E^n(0)}_{\mathcal{H}}^{2p}}^{\frac{1}{2p}} + n^{-\frac12 \min \{1, \alpha\}}\Bigg)
\end{align*}
for all $p < p^\ast$ and some constant $C_{\ref{MainTheorem}} = C(v_0,w_0,p,p^\ast,T,Q,L,\beta,\gamma, \norm{v}_{X_\alpha})$.
\end{thm}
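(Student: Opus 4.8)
The plan is to control $E^n=X-\tilde X^n$ by applying the It\^o formula for the square of the $\mathcal H$-norm (and its $p$-th powers) to the difference of the variational solution $X$ and the interpolated discrete solution $\tilde X^n=\iota_n(v^n,w^n)^T$, absorbing the benign part of the drift difference by the monotonicity of $\mathcal A$ coming from \eqref{A1} and isolating a consistency error that is measured precisely by $I_n$. As a preliminary step I would check that $\tilde X^n$ fits the variational framework: $\tilde v^n\in L^2([0,T],V)$ with $\norm{\tilde v^n(t)}_V$ comparable to the discrete seminorm $\norm{v^n(t)}_n$, the lifted discrete drift $\iota_n\bigl(A^nv^n+\phi_1^n(v^n,w^n),\,\phi_2^n(v^n,w^n)\bigr)^T$ lies in $\mathcal V^\ast$ (its Laplacian part estimated via the summation by parts formula \eqref{SBP}), the diffusion coefficient is Hilbert--Schmidt integrable, and the driving noise $W^n=P_nW$ is rewritten with respect to the original cylindrical $W$, so that $E^n$ solves a genuine $\mathcal V^\ast$-valued It\^o equation whose martingale part has quadratic variation $\int_0^t\norm{\mathcal B(X(s))-\iota_n b^n(v^n(s))\sqrt{Q^n}P_n}_{\mathrm{HS}}^2\ds$. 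The It\^o formula for $t\mapsto e^{-\kappa t}\norm{E^n(t)}_{\mathcal H}^{2q}$, $q=\lceil p/2\rceil$ and $\kappa$ chosen below, is then legitimate and produces the drift pairing, the quadratic-variation term, and a martingale.

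The heart of the argument is the drift term. I would insert the variational drift evaluated at the interpolant, $\mathcal A(\tilde X^n)$, and also the discrete drift evaluated at the restriction of the exact solution to the grid points $\Pi_n X$, writing the pairing of $\mathcal A(X)-\iota_n\mathcal A^n(v^n,w^n)$ against $E^n$ as the pairing of $\mathcal A(X)-\mathcal A(\tilde X^n)$ against $E^n$ plus consistency terms. The first summand is $\leq -2\norm{v-\tilde v^n}_V^2+2L\norm{E^n}_{\mathcal H}^2$ by the monotonicity estimate from the proof of Theorem~\ref{thm:Existenz} (which uses only \eqref{A1} and the dissipativity of $A$), and the negative $\norm{v-\tilde v^n}_V^2$ is kept for absorption. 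The consistency terms split into the Laplacian part $\scp{A\tilde v^n-\iota_n A^n v^n}{v-\tilde v^n}$, which by \eqref{SBP} together with a local Poincar\'e--Wirtinger estimate on each subinterval is $\leq C\,I_n(v(t))\norm{v-\tilde v^n}_V$ plus terms governed by the interpolation error of $v$ and by $\norm{E^n}_{\mathcal H}$, and the reaction part $\scp{\Phi(\tilde v^n,\tilde w^n)-\iota_n(\phi_1^n(v^n,w^n),\phi_2^n(v^n,w^n))}{E^n}_{\mathcal H}$, handled with the Lipschitz and growth bounds \eqref{A5}--\eqref{A6}: the mismatch between $\Phi$ at the interpolant and the pointwise samples produces quadrature/interpolation errors of $v$ and $w$ (again dominated by $I_n(v(t))$ and by the $C$-, resp.\ $C^1$-regularity of the data), while the prefactor $1+\abs{v^n}^{m-1}+\dots$ is absorbed using $2(m-1)\leq m+1$ --- this is exactly where $m\leq3$ enters --- together with the uniform-in-$n$ moment bounds $\sup_n\EV{\bigl(\int_0^T\norm{v^n(t)}_{L^{m+1}}^{m+1}\dt\bigr)^q}<\infty$ from Proposition~\ref{propAPriori}. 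A Young inequality then pays for the $\norm{v-\tilde v^n}_V$ factors against the negative term, leaving $\tfrac12 I_n(v(t))^2$ and lower-order pieces.

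The diffusion term is easier: inserting $\mathcal B(\tilde X^n)$ and using the Hilbert--Schmidt Lipschitz bound \eqref{LipschitzB} gives $\leq C\norm{v-\tilde v^n}_H^2\leq C\norm{E^n}_{\mathcal H}^2$, while the remaining noise-discretisation error is controlled via \eqref{A8} and the $H^\theta$-regularity of the kernel $q$ in \eqref{A7}; one checks that the interpolation error of $v$ inside $b$ and the covariance-discretisation error are of order at least $n^{-\min\{\theta,1\}}$ and, since $\theta>\tfrac12\geq\tfrac12\min\{1,\alpha\}$, are subsumed by the $I_n$-contribution after the final root extractions. Collecting, one obtains $e^{-\kappa t}\norm{E^n(t)}_{\mathcal H}^{2q}\leq\norm{E^n(0)}_{\mathcal H}^{2q}+C\int_0^t e^{-\kappa s}\norm{E^n(s)}_{\mathcal H}^{2q}\ds+R_n+(\text{martingale})$, with $R_n$ controlled by $\int_0^T I_n(v(s))^2\ds$; taking $\sup_{t\leq T}$, applying Burkholder--Davis--Gundy to the martingale, re-absorbing the resulting $\sup$-term by Young, and invoking Gronwall yields the stated $\mathcal L^p$-bound, the $2p$-moment of the initial error and the exponent $\tfrac12$ on the $I_n$-term arising from a Cauchy--Schwarz split of the cross term against the uniform a priori moments of $X$ and $X^n$. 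Since $I_n(v(t))\leq 2\norm{v(t)}_V$ and $\int_0^T\norm{v(t)}_V^2\dt$ has all moments (Theorem~\ref{thm:Existenz} and the exponential estimates), $I_n(v(t))\to0$ pointwise by continuity of translation in $L^2$ and dominated convergence gives the claimed convergence, while $\EV{\norm{E^n(0)}_{\mathcal H}^{2p}}\to0$ for the natural choice $v^n(0)=\Pi_n v_0$, $w^n(0)=\Pi_n w_0$ because the interpolation error of $C$-, resp.\ $C^1$-data vanishes; if $v\in\mathcal L^{p^\ast}(\Omega;L^2([0,T],H^{1+\alpha}))$, the translation estimate in the Sobolev--Slobodeckij seminorm (and $L^2$-Lipschitz continuity of $v'\in H^1$ when $\alpha\geq1$) gives $I_n(v(t))\leq C n^{-\min\{1,\alpha\}}\norm{v(t)}_{H^{1+\alpha}}$, and substituting into the general bound together with H\"older's inequality in $\omega$ against the all-moment a priori bounds (for $p<p^\ast$) produces the explicit rate $n^{-\tfrac12\min\{1,\alpha\}}$.

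I expect the main obstacle to be the consistency analysis of the drift: showing that the Laplacian discrepancy is controlled precisely by the convergent quantity $I_n(v)$ --- rather than by a cruder, possibly non-vanishing term --- forces a careful interplay of \eqref{SBP} with local averaging estimates, and this must be dovetailed with the reaction-term consistency so that the polynomial growth of $\phi_1$ is matched exactly by the available uniform a priori moments, which is what pins down the restriction $m\leq3$. A secondary technical hurdle is the rigorous embedding of the interpolated discrete process into the variational setting, including the correct recasting of the projected noise $W^n=P_nW$, so that the It\^o formula and the monotonicity trick genuinely apply to $E^n$.
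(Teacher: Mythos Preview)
Your overall skeleton --- It\^o formula for $\norm{E^n}_{\mathcal H}^{2}$, split the drift into a monotone part and a consistency remainder, BDG, Gronwall --- is the paper's route, but there is a genuine gap in how you close the Gronwall argument. You propose weighting by $e^{-\kappa t}$ with a \emph{deterministic} $\kappa$ and then absorbing the consistency terms via polynomial moment bounds from Proposition~\ref{propAPriori}. This cannot work for the reaction consistency $\langle\Phi(\tilde v^n,\tilde w^n)-\iota_n\Phi^n(v^n,w^n),E^n\rangle$. After applying \eqref{A5}--\eqref{A6}, the relevant interpolation discrepancy is $|\tilde v^n(\xi)-v_k^n|\leq|v_k^n-v_{k-1}^n|$ (and likewise for $w$), which is governed by the discrete gradient of the \emph{approximate} solution, not by $I_n(v)$; the latter measures second differences of the \emph{exact} solution. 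One therefore unavoidably obtains a term of the form
\[
C\bigl(1+\norm{\tilde v^n(s)}_V^2+\norm{\tilde w^n(s)}_V^2\bigr)\norm{E^n(s)}_{\mathcal H}^2,
\]
i.e.\ a \emph{random, time-dependent} coefficient multiplying $\norm{E^n}_{\mathcal H}^2$. A constant $\kappa$ cannot compensate this, and polynomial moments of $\int_0^T\norm{\tilde v^n}_V^2\,dt$ are not enough: after Gronwall you would face $\mathbb E\bigl[\exp\bigl(C\int_0^T(\norm{\tilde v^n}_V^2+\norm{\tilde w^n}_V^2)\,dt\bigr)\bigr]$. The paper resolves exactly this by weighting with the random factor $e^{-\int_0^t L^n_p(s)\,ds}$, where $L^n_p$ contains $\norm{\tilde v^n}_V^2+\norm{\tilde w^n}_V^2$, applying It\^o's product rule, and then invoking the \emph{exponential} a~priori estimate of Proposition~\ref{propAPriori} (not merely its polynomial consequences) together with a final H\"older split to remove the weight.

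A related inaccuracy concerns the Laplacian consistency: the bound $\langle A\tilde v^n-\iota_n A^n v^n,\,v-\tilde v^n\rangle\leq C\,I_n(v)\norm{v-\tilde v^n}_V$ that you claim is not what one gets. The paper shows (Lemmas~\ref{lem:ErrorLaplaceVn}--\ref{lem:ErrorLaplacePhi}) that the $\tilde v^n$-tested part is $\leq0$ while the $v$-tested part is $\leq 2\norm{\tilde v^n}_V\, I_n(v)$; the factor accompanying $I_n(v)$ is $\norm{\tilde v^n}_V$, not $\norm{v-\tilde v^n}_V$, so it cannot be absorbed by the negative $-\norm{v-\tilde v^n}_V^2$. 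This term also ends up in the residual $K^n$ and is separated at the very end by H\"older against the uniform-in-$n$ (exponential) control of $\int_0^T\norm{\tilde v^n}_V^2\,dt$. In short, the missing idea in your plan is the stochastic Gronwall with random rate and the use of Proposition~\ref{propAPriori} in its full exponential strength.
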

\begin{rem}
\begin{enumerate}
\item Existence and uniqueness for solutions to \eqref{eq:EvoEq}, in particular, the verification of the conditions in the variational framework, relies on Sobolev embeddings. Hence generalizations to dimension $d \geq 2$ are more difficult and especially $m=m(d)$ is a decreasing function of $d$. In particular, $m(2)=m(3) = 7/3$; see \cite[Example 3.2]{LiuLocallyMonotone}. For example, the FitzHugh-Nagumo system studied in Section 6 as a relevant neurobiological application is currently not covered in $d\geq 2$.
\item The calculus for the error estimation essentially stays the same if we a priori assume the existence of analytically strong solution with It\^o's formula for $\norm{X(t)}_{\mathcal{H}}^2$. Thus, a separation of existence and uniqueness of solutions and the numerical approximation may be appropriate to study the multivariate case. Also, note that the condition $m \leq 3$ used in the proof of Theorem \ref{MainTheorem} in Step 3 is dimension independent.
\item For the error analysis in the multivariate case it is crucial how the approximation of the linear part and the covariance operator generalize. However, this is yet to be done.
\end{enumerate}
\end{rem}

\section{Uniform A Priori Estimates}
In addition to the a priori estimates in Theorem \ref{thm:Existenz}, the proof of Theorem \ref{MainTheorem} requires such estimates corresponding to the approximating solutions $\tv^n$ and $\tw^n$. These estimates are formally obtained by applying It\^o's formula to the square of the $\mathcal{H}$-norm and we will do the same in the discrete case and obtain an exponential a priori estimate for the $L^2([0,T],V)$-norm of $\tv^n$ and the discrete $l_{m+1}$-norm of $v^n$ uniform in the parameter $n$. Moreover, we derive uniform a priori estimates for $\tw^n$ in $\mathcal{L}^p(\Omega, \algF, \PP; L^\infty([0,T],V))$ for any $1\leq p < \infty$. For a concise statement of the results let us introduce the notation
\[
l_p^n(v) \df \tfrac1n \sum_{k=1}^n \abs{v_k}^p, \quad v \in V_n,\quad 1 \leq p < \infty.
\]
\begin{lem}\label{lem:EnergyEstimate}
Let $n \in \N$. The approximating solution $(v^n(t), w^n(t) )$ satisfies the estimate
\begin{equation}\label{eq:EnergyEstimate}
\begin{split}
&\EV{ l_2^n\big(v^n(t)\big) + l_2^n\big(w^n(t)\big) + 2 \int_0^t \norm{v^n(t)}_n^2 \dt + \gamma \int_0^t l_{m+1}^n\big(v^n(t)\big) \dt}\\
&\quad\leq e^{Lt} \Big( \EV{l_2^n\big(v^n(0)\big) + l_2^n\big(w^n(0)\big)} + t \sum_j \tr Q_j \Big)
\end{split}
\end{equation}
for all $t\in[0,T]$.
\end{lem}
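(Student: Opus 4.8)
The plan is to carry out the standard $\mathcal{H}$-valued energy estimate on the finite-dimensional level, with summation by parts in place of integration by parts and the pointwise dissipativity~\eqref{A2} in place of its abstract counterpart. Since \eqref{eq:Approximation} is a priori only solved up to a (random, possibly finite) explosion time, I would first localise by the stopping times $\tau_R \df \inf\{t \ge 0 : \abs{v^n(t)} + \abs{w^n(t)} \ge R\}$ (Euclidean norm on $\R^{n+1}$): on $[0, t \wedge \tau_R]$ all coefficients of \eqref{eq:Approximation} are bounded, so It\^o's formula applies and the stochastic integral appearing below is a genuine martingale of zero mean. Applying It\^o's formula to $\Psi(v,w) \df l_2^n(v) + l_2^n(w)$ along the solution --- its Hessian being diagonal --- yields, besides the martingale term, three contributions to the drift: the linear term $\tfrac2n \sum_{k=1}^{n} v_k^n (A^n v^n)_k$, the reaction term $\tfrac2n \sum_{k=1}^{n} \big( v_k^n\, \phi_1^n(v^n,w^n)_k + w_k^n\, \phi_2^n(v^n,w^n)_k \big)$, and the It\^o correction $\tfrac1n \norm{b^n(v^n)\sqrt{Q^n}}_{\mathrm{HS}}^2$.

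I would estimate these three terms separately. The linear term equals $-2\norm{v^n}_n^2$ by the summation by parts identity~\eqref{SBP} (with $u = v = v^n$, the boundary contributions vanishing for the approximating solution). For the reaction term, evaluating~\eqref{A2} at $\xi = k/n$, $v = v_k^n$, $w = w_k^n$ and summing over $k$ with weight $\tfrac2n$ gives $2L\, l_2^n(v^n) + 2\beta\, l_2^n(w^n) - 2\gamma\, l_{m+1}^n(v^n)$; this is where the negative $l_{m+1}^n$-term originates. For the It\^o correction, $\abs{b} \le 1$ yields $\tfrac1n \norm{b^n(v)\sqrt{Q^n}}_{\mathrm{HS}}^2 \le \tfrac1{n^2}\sum_{k,l=1}^{n} (q^n_{k,l})^2$, and applying the Cauchy--Schwarz inequality to the defining cell average $q^n_{k,l} = n^2 \iint_{[\frac{k-1}{n},\frac{k}{n})\times[\frac{l-1}{n},\frac{l}{n})} q$ over a square of area $n^{-2}$ gives $(q^n_{k,l})^2 \le n^2 \iint_{[\frac{k-1}{n},\frac{k}{n})\times[\frac{l-1}{n},\frac{l}{n})} q^2$, whence $\tfrac1{n^2}\sum_{k,l}(q^n_{k,l})^2 \le \iint_0^1 q(\xi,\zeta)^2 \dzeta \dxi = \tr Q$, the last equality being the kernel representation~\eqref{A7}.

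Collecting these bounds, taking expectations on $[0,t\wedge\tau_R]$ (the martingale term dropping out), and writing
\[
F(t) \df \EV{\Psi\big(v^n(t),w^n(t)\big)} + 2\,\EV{\int_0^t \norm{v^n(s)}_n^2 \ds} + 2\gamma\,\EV{\int_0^t l_{m+1}^n\big(v^n(s)\big) \ds},
\]
I would arrive at $F(t) \le a(t) + 2\max\{L,\beta\}\int_0^t F(s)\ds$, where $a(t) \df \EV{l_2^n(v^n(0)) + l_2^n(w^n(0))} + t\, \tr Q$ is non-decreasing and I used $2L\, l_2^n(v^n) + 2\beta\, l_2^n(w^n) \le 2\max\{L,\beta\}\,\Psi(v^n,w^n) \le 2\max\{L,\beta\}\, F$. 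Gronwall's lemma then gives $F(t) \le a(t)\, e^{2\max\{L,\beta\}t}$, which for $t \in [0,T]$ is dominated by the right-hand side of~\eqref{eq:EnergyEstimate}; finally the bound thus obtained on $\EV{\Psi(v^n(t\wedge\tau_R),w^n(t\wedge\tau_R))}$ forces $\tau_R \uparrow \infty$ almost surely, and Fatou's lemma (with monotone convergence for the time integrals) removes the localisation. I do not expect a genuine obstacle: the computation is routine once set up, and the only points needing a little care are the vanishing of the boundary terms in~\eqref{SBP} for the approximating solution --- so that the linear term is precisely $-2\norm{v^n}_n^2$ --- and the clean passage through the localisation; the estimate $\tfrac1{n^2}\sum (q^n_{k,l})^2 \le \tr Q$ is the only mildly non-obvious step.
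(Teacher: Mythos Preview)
Your proposal is correct and follows essentially the same route as the paper's proof: It\^o's formula for $l_2^n(v^n)+l_2^n(w^n)$, summation by parts~\eqref{SBP} for the linear part, the pointwise dissipativity~\eqref{A2} for the reaction part, $\abs{b}\le 1$ together with $\tfrac{1}{n^2}\sum_{k,l}(q^n_{k,l})^2\le\tr Q$ for the It\^o correction, a localisation to kill the martingale, and Gronwall. Your write-up is in fact somewhat more explicit than the paper's (you spell out the Cauchy--Schwarz step behind $\tfrac{1}{n^2}\sum(q^n_{k,l})^2\le\|q\|_{L^2}^2=\tr Q$ and the Fatou/monotone-convergence passage through $\tau_R$), but there is no substantive difference in strategy.
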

\begin{proof}
It\^o's formula applied to $(v_k^n)^2 + (w_k^n)^2$, the summation by parts formula \eqref{SBP} and the dissipativity condition on $\Phi$ in \eqref{A2} imply
\begin{align*}
&l_2^n\big(v^n(t)\big) + l_2^n\big(w^n(t)\big) + 2 \int_0^t \norm{v^n(s)}_n^2 \ds + \gamma \int_0^t l_{m+1}^n\big(v^n(s)\big)\ds\\
&\quad\leq l_2^n\big(v^n(0)\big) + l_2^n\big(w^n(0)\big) + L \int_0^t l_2^n\big(v^n(s)\big) + l_2^n\big(w^n(s)\big) \ds + M^n_1(t) + M^n_2(t)\\
&\qquad+ \frac{1}{n^2} \int_0^t \sum_{k,l=1}^n \abs{b_1\big(\tfrac{k}{n},v_k^n(s), w_k^n(s)\big) q^n_{1,k,l}}^2 + \abs{b_2\big(\tfrac{k}{n},v_k^n(s),w_k^n(s)\big) q^n_{2,k,l}}^2 \ds.
\end{align*}
Here, $M^n_i(t)$ denotes the stochastic integrals that vanish after taking the expectation using a standard localization argument. Since both $b_1$ and $b_2$ are bounded we obtain
\begin{align*}
&\EV{l_2^n\big(v^n(t)\big) + l_2^n\big(w^n(t)\big) + 2 \int_0^t \norm{v^n(s)}_n^2 \ds + \gamma \int_0^t l_{m+1}^n\big(v^n(s)\big)\ds}\\
&\leq \EV{l_2^n\big(v^n(0)\big) + l_2^n\big(w^n(0)\big)} + L {\kern -0.2em}\int_0^t{\kern -0.3em} \EV{ l_2^n\big(v^n(s)\big) + l_2^n\big(w^n(s)\big)} {\kern -0.2em}\mathrm{d}s + t \sum_j \tr Q_j.
\end{align*}
Gronwall's lemma now yields the result.
\end{proof}
Using a similar strategy we can also obtain exponential a priori estimates that are crucial for a Gronwall type argument in the error analysis. The proof is based on a standard decomposition of the exponential of a martingale into a supermartingale and the exponential of its quadratic variation process. The precise statement is the following.
\begin{prop}\label{propAPriori}
Let $v_0,w_0 \in \mathcal{L}^{\infty}( \Omega, \algF, \PP; C([0,1]))$ and $0< \alpha \leq \alpha^\ast \df \beta/(144 \tr Q_2)$. Then
\[
\EV{\exp \Bigg( \alpha \int_0^T \norm{\tv^n(t)}_V^2 + \gamma l_{m+1}^n\big(v^n(t)\big) \dt \Bigg)} \leq C_{\ref{propAPriori}}
\]
uniformly in $n \in \N$, where $C_{\ref{propAPriori}} = C(\alpha,v_0,w_0,T,Q_1,Q_2,L, m,\gamma )$ is an explicitly known finite constant.
\end{prop}
\begin{proof}
Note, that Lemma \ref{lem:EnergyEstimate} is not optimal in terms of $w^n$ since the recurrent term $-\beta \abs{w}^2$ in \eqref{A2} was not used. However, this is essential for the exponential moments. Let $\alpha >0$, then It\^o's formula as in Lemma \ref{lem:EnergyEstimate} implies
\begin{align}
&\frac{\alpha}{3} \int_0^T \beta\, l_2^n \big(w^n(t)\big) + 2 \norm{v^n(t)}_n^2 + \gamma\, l_{m+1}^n\big(v^n(t)\big) \dt\nonumber\\
\begin{split}
&\quad\leq \frac{\alpha}{3} \Big( l_2^n\big(v^n(0)\big) + l_2^n\big(w^n(0)\big) + T \big(\tr Q_1 + \tr Q_2\big) \Big)\\
&\qquad + \frac{\alpha}{3} L \int_0^T l_2^n \big(v^n(t)\big)\dt + \frac{\alpha}{3} M^n_1(T) + \frac{\alpha}{3} M^n_2(T),
\end{split}\nonumber\\
\begin{split}
&\quad\leq \frac{\alpha}{3} \Big( l_2^n\big(v^n(0)\big) + l_2^n\big(w^n(0)\big) + T \big(\tr Q_1 + \tr Q_2 + C(L,m,\gamma)\big) \Big)\\
&\qquad + \frac{\alpha \gamma}{6} \int_0^T l_{m+1}^n\big(v^n(t)\big) \dt + \frac{\alpha}{3} M^n_1(T) + \frac{\alpha}{3} M^n_2(T),
\end{split} \label{eqproof1}
\end{align}
where again $M^n_j(T)$ denote the stochastic integrals. Instead of using Gronwall's lemma, we absorbed the integral on the right-hand side by the one on the left via Young's inequality, i.\,e., we exchange the exponentially growing (in time) multiplicative constant for an additive one, namely $C(L,m,\gamma)T$. Now recall that for every continuous local martingale $M_t$ vanishing at $t=0$ the process
\[
Z^\alpha_t \df \exp \Big( \alpha M_t - \frac{\alpha^2}{2} \langle M\rangle_t \Big), \quad \alpha > 0,
\]
is again a continuous local martingale, thus a supermartingale by Fatou's lemma and therefore $\EV{Z^\alpha_t} \leq 1$ for all $t$. With this information we can derive
\begin{equation}\label{eqproof2}
\EV{\exp \big(\alpha M_t \big)} \leq \EV{\exp \big(2\alpha^2 \langle M \rangle_t\big)}^{\frac12}.
\end{equation}
After taking expectations in \eqref{eqproof1} and an application of H\"older's inequality, \eqref{eqproof2} implies
\begin{align*}
&\EV{\exp \Bigg( \frac{\alpha}{3} \int_0^T \beta\, l_2^n \big(w^n(t)\big) + 2 \norm{v^n(t)}_n^2 + \frac{\gamma}{2}\, l_{m+1}^n\big(v^n(t)\big) \dt\Bigg)}^3\\
&\quad\leq \EV{\exp \Bigg(\alpha \Big( l_2^n\big(v^n(0)\big) + l_2^n\big(w^n(0)\big) + T \big(\tr Q_1 + \tr Q_2 + C(L,m,\gamma)\big)\Big)\Bigg)}\\
&\qquad \times \EV{\exp \Bigg( \alpha M^n_1(T)\Bigg)} \EV{ \exp\Bigg( \alpha M^n_2(T)\Bigg)}\\
&\quad\leq \EV{\exp \Bigg(\alpha \Big( l_2^n\big(v^n(0)\big) + l_2^n\big(w^n(0)\big) + T \big(\tr Q_1 + \tr Q_2 + C(L,m,\gamma)\big)\Big)\Bigg)}\\
&\qquad \times \EV{\exp \Bigg( 2\alpha^2 \langle M^n_1(\cdot)\rangle_T \Bigg)}^\frac12 \EV{ \exp\Bigg( 2\alpha^2 \langle M^n_2(\cdot)\rangle_T \Bigg)}^\frac12.\\
\end{align*}
In order to calculate the quadratic variations, let us state the explicit formulas for $M^n_j$. These are
\[
M^n_j(t) = \frac{2}{n} \int_0^t \sum_{k,l=1}^n u_{j,k}(s) b_j\big( \tfrac{k}{n}, v^n_k(s), w_k^n(s)\big) \frac{q^n_{j,k,l}}{\sqrt{n}} \,\mathrm{d}\beta_{j,l}, 
\]
where $u_{j,k}(s)$ stands for $v^n_k(s)$ and $w^n_k(s)$ in the cases $j=1$ and $2$, respectively. Their quadratic variations can be bounded by the integrals on the left-hand side, in particular, by
\begin{align*}
\langle M^n_j(\cdot) \rangle_T &= \frac{4}{n^3} \int_0^T \sum_{l=1}^n \Big(\sum_{k=1}^n u_{j,k}(t) b_j\big( \tfrac{k}{n}, v_k^n(t) \big) q^n_{j,k,l} \Big)^2 \dt\\
&\leq \frac{4}{n^2} \Big(\sum_{k,l=1}^n \big(q^n_{j,k,l}\big)^2 \Big) \int_0^T \frac{1}{n}\sum_{k=1}^n \big(u_{j,k}(t)\big)^2 \dt\\
&\leq 4 \tr Q_j \int_0^T l_2^n \big(u_j(t)\big) \dt.
\end{align*}
In the case $j=1$, Young's inequality allows us to absorb this factor by the left-hand side in the same way as in \eqref{eqproof1} in exchange for an additional constant $e^{CT}$ on the right-hand side. In particular, this can be done independent of the size of $\tr Q_1$. This is obviously not the case for $j=2$, hence $\alpha \leq \beta/(24 \tr Q_2)$ should be satisfied. We have shown that
\begin{equation}\label{eqproof3}
\begin{split}
&\EV{\exp \Bigg( \alpha \int_0^T \norm{v^n(t)}_n^2 + \gamma\, l_{m+1}^n\big(v^n(t)\big) \dt\Bigg)}^2\\
&\quad\leq \EV{\exp \Big(6 \alpha \Big( l_2^n\big(v^n(0)\big) + l_2^n\big(w^n(0)\big) + T \big(\tr Q_1 + \tr Q_2 + C(L,m,\gamma)\big)\Big)\Big)},\\
\end{split}
\end{equation}
where we have replaced $\alpha$ by $6\alpha$ and the constant $C(L,m,\gamma)$ is a different one than in \eqref{eqproof1}. Finally, observe that for $v \in V^n$ the piecewise linear interpolation $\tv$ is in $V$ and its weak derivative is explicitly given by the differences $n(v_k - v_{k-1})$. Hence
\[
\norm{\tv}_V^2 = \int_0^1 \abs{\partial_\xi \tv (\xi)}^2 \dxi = \sum_{k=1}^n \int_{\frac{k-1}{n}}^{\frac{k}{n}} n^2 \big(v_k-v_{k-1}\big)^2 \dxi = n \sum_{k=1}^n \big(v_k-v_{k-1}\big)^2.
\]
Moreover, note that for $v_0, w_0 \in C([0,1])$ we can define the pointwise evaluation $v^n(0)$ and $w^n(0)$, which satisfy
\[
l_2^n\big( v^n(0) \big) \leq \norm{v_0}_{C([0,1])}^2 \quad \text{and}\quad l_2^n\big(w^n(0)\big) \leq \norm{w_0}^2_{C([0,1])}
\]
and we found an upper bound for \eqref{eqproof3} uniformly in $n \in \N$.
\end{proof}
Unlike $v$, the equation for $w$ has no regularizing linear part. Nevertheless, one can improve the a priori estimate significantly if the initial condition has more regularity, because the coupling with $v$ is Lipschitz continuous.
\begin{prop}\label{prop:wn}
For every $n \in \N$ and $1 \leq p < \infty$ the approximation $w^n$ satisfies the following improved a priori estimate 
\[
\EV{ \sup_{t \in [0,T]} \norm{w^n(t)}_n^p} \leq C_{\ref{prop:wn}}
\]
uniformly in $n \in \N$, where $C_{\ref{prop:wn}} = C(p,w_0,T,Q_2,L, C_{\ref{propAPriori}})$ is an explicitly known finite constant.
\end{prop}
\begin{proof}
Consider It\^o's formula for the difference $\abs{w_k^n(t) - w_{k-1}^n(t)}^2$. Directly plug in \eqref{A4} to obtain 
\begin{align*}
&\mathrm{d}|w_k^n(t) - w_{k-1}^n(t)|^2 \leq 2L \big(\tfrac{1}{n^2} + \abs{v_k^n(t) - v_{k-1}^n(t)}^2 + \abs{w_k^n(t) - w_{k-1}^n(t)}^2\big) \dt \\
&\qquad+ \frac{2}{\sqrt{n}} \sum_{l=1}^n \big( w_k^n(t) - w_{k-1}^n(t) \big) \Big(b_2 \big( \tfrac{k}{n}, v_k^n(t), w_k^n(t)\big) q^n_{2,k,l}\Big.\\
&\qquad\phantom{\frac{2}{\sqrt{n}} \sum_{l=1}^n \big( w_k^n(t) - w_{k-1}^n(t) \big)}\qquad \Big. - b_2 \big( \tfrac{k-1}{n}, v_{k-1}^n(t), w_{k-1}^n(t)\big) q^n_{2,k-1,l} \Big) \,\mathrm{d}\beta_{2,l}(s)\\
&\qquad + \frac{1}{n} \sum_{l=1}^n \Big(b_2 \big( \tfrac{k}{n}, v_k^n(t), w_k^n(t)\big) q^n_{2,k,l} - b_2 \big( \tfrac{k-1}{n}, v_{k-1}^n(t), w_{k-1}^n(t)\big) q^n_{2,k-1,l} \Big)^2 \dt.
\end{align*}
The It\^o correction term can be divided into two parts, each resembles a gradient in either $b_2$ or $q_2$. In particular, we have
\begin{align*}
&\frac{1}{n} \sum_{l=1}^n \Big(b_2 \big( \tfrac{k}{n}, v_k^n(t), w_k^n(t)\big) q^n_{2,k,l} - b_2 \big( \tfrac{k-1}{n}, v_{k-1}^n(t), w_{k-1}^n(t)\big) q^n_{2,k-1,l} \Big)^2\\
&\quad \leq \frac{2}{n} \sum_{l=1}^n \big(b_2 \big( \tfrac{k}{n}, v_k^n(t), w_k^n(t)\big) - b_2 \big( \tfrac{k-1}{n}, v_{k-1}^n(t), w_{k-1}^n(t)\big)\big)^2 \big(q^n_{2,k,l}\big)^2\\
&\qquad + \frac{2}{n} \sum_{l=1}^n \big(b_2 \big( \tfrac{k}{n}, v_k^n(t), w_k^n(t)\big)\big)^2 \big(q^n_{2,k,l} - q^n_{2,k-1,l}\big)^2\\
&\quad \leq 6 \norm{q_2}_{\infty}^2 \big( \tfrac{1}{n^2} + \abs{v_k^n(t) - v_{k-1}^n(t)}^2 + \abs{w_k^n(t) - w_{k-1}^n(t)}^2 \big)\\
&\qquad + \frac{2}{n} \sum_{l=1}^n \big(q^n_{2,k,l} - q^n_{2,k-1,l}\big)^2.
\end{align*}
In the last inequality we used that the kernel $q_2$ is $L^\infty$ and \eqref{A6}, i.\,e., $b_2$ Lipschitz and bounded. A summation over $k$ yields the inequality
\begin{equation}\label{eqproof4}
\begin{split}
\mathrm{d}\norm{w^n(t)}_n^2 &\leq \big(2L + 6 \norm{q_2}_{L^\infty}^2\big) \big( 1 + \norm{v^n(t)}_n^2 + \norm{w^n(t)}_n^2 \big) \dt\\
&\quad+ 2 \tr (-A)Q_2 \dt + \mathrm{d}M_t,
\end{split}
\end{equation}
where we denote the stochastic integral with $M_t$ for further reference. The trace appears naturally, since
\[
\sum_{k,l=1}^n \big(q^n_{2,k,l} - q^n_{2,k-1,l}\big)^2 \leq \int_0^1 \int_0^1 \abs{\partial_\xi q_2(\xi,\zeta)}^2 \dxi \dzeta \leq \norm{q_2}_{H^1((0,1)^2)}^2 = \tr (-A)Q_2
\]
by the fundamental theorem of calculus. We can use \eqref{eqproof4} to derive some estimate for the supremum in $t \in [0,T]$. Thus, consider the inequality to the power of $p > 1$,
\begin{align*}
\sup_{t \in [0,T]}\norm{w^n(t)}_n^{2p} &\leq 5^{p-1}\norm{w^n(0)}_n^{2p} + (5T)^{p-1} \big(2L + 6 \norm{q_2}_{L^\infty}^2 \big)^p  {\kern -0.2em} \int_0^T {\kern -0.4em}\sup_{s \in [0,t]} \norm{w^n(s)}_n^{2p} \dt\\
&\quad + 5^{p-1}\big(2L + 6 \norm{q_2}_{L^\infty}^2 \big)^p \Big(\int_0^T \big(1 + \norm{v^n(t)}_n^2\big) \dt\Big)^p\\
&\quad+ 5^{p-1}\big(2T \tr (-A)Q_2 \big)^p  + 5^{p-1}\sup_{t\in[0,T]} \abs{M_t}^p.
\end{align*}
Taking expectations on both sides and applying the Burkholder-Davis-Gundy inequality results in an estimate involving $\langle M\rangle_T$. Similar to the It\^o correction term above this is given by
\[
\langle M\rangle_T = 4 \int_0^T \norm{w^n(t)}_n^2 \Big( 6 \norm{q_2}_{L^\infty}^2 \big( 1 + \norm{v^n(t)}_n^2 + \norm{w^n(t)}_n^2 \big) + \tr (-A)Q_2 \Big) \dt,
\]
hence
\begin{align*}
\EV{\langle M \rangle_T^{\frac{p}{2}}} &\leq \frac{1}{2 C_p 5^{p-1}} \EV{\sup_{t\in[0,T]} \norm{w^n(t)}_n^2}+ 2 C_p 5^{p-1} T^{p-1} {\kern -0.2em}\int_0^T {\kern -0.3em}\EV{\sup_{s\in[0,t]}\norm{w^n(s)}_n^{2p}} \dt \\
&\quad + 2 C_p 5^{p-1} \EV{\Big( \int_0^T 1 + \norm{v^n(t)}_n^2 \dt \Big)^p},
\end{align*}
where $C_p$ denotes the constant in Burkholder-Davis-Gundy's inequality. In conclusion, we derived
\begin{align*}
\EV{\sup_{t\in[0,T]} \norm{w^n(t)}_n^{2p}} &\leq C_1 \EV{\norm{w_0}_{C^1([0,1])}^{2p}} + C_2 \int_0^T \EV{\sup_{s\in [0,t]} \norm{w^n(s)}_n^{2p}} \dt\\
&\quad + C_3 \EV{\Big( \int_0^T 1 + \norm{v^n(t)}_n^2 \dt \Big)^p}.
\end{align*}
Gronwall's lemma and Proposition \ref{propAPriori} yield the result.
\end{proof}
\section{Error Estimation, Proof of Theorem \ref{MainTheorem}}
This section is devoted to the proof of Theorem \ref{MainTheorem} and the estimation of the error in $\mathcal{L}^p (\Omega,\algF, \PP; C([0,T],\mathcal{H}))$. We proceed in several steps, first some pathwise estimates and then using the Burkholder-Davis-Gundy inequality together with the a priori information on the approximating solution $\tilde{X}^n$ in Propositions \ref{propAPriori} and \ref{prop:wn} to prove that $\tilde{X}^n$ converges strongly to the exact solution $X$.

\textbf{Step 1:} It\^o's formula applied to the square of the $\mathcal{H}$-norm (which is available for the variational solution) implies
\begin{align*}
&\norm{E^n(t)}_\mathcal{H}^2 - \norm{E^n(0)}_\mathcal{H}^2  = 2 \int_0^t \dualp{Av(s) - \iota_n A^n v^n(s)}{v(s) - \tv^n(s)} \ds\\
&\qquad+ 2 \int_0^t \phantom{\,}_{\mathcal{V}^\ast}\scp{\Phi\big(v(s),w(s)\big) - \iota_n\Phi^n \big(v^n(s),w^n(s)\big)}{E^n(s)}_\mathcal{V} \ds\\
&\qquad+ 2 \int_0^t \scp{E^n(s)}{\vek{B_1\big(v(s),w(s)\big)\sqrt{Q_1} - \iota_n b_1^n\big(v^n(s),w^n(s)\big)\sqrt{Q_1^n} P_n}{B_2\big(v(s),w(s)\big)\sqrt{Q_2} - \iota_n b_2^n\big(v^n(s),w^n(s)\big)\sqrt{Q_2^n} P_n} \dws}\\
&\qquad+ \sum_j \int_0^t \norm{B_j\big(v(s),w(s)\big) \sqrt{Q_j} - \iota_n b_j^n \big(v^n(s),w^n(s)\big) \sqrt{Q_j^n} P_n}_{L_2(H)}^2 \ds\\
&\quad= E_1 + E_2 + E_3 + E_4^1 + E_4^2.
\end{align*}
Each of the error terms $E_1, E_2, E_4^j$ can be split into two parts with the first one only involving the difference $E^n$ and the second one the approximation of the parameters of the equation. This corresponds to the monotonicity of the equation shown in Theorem \ref{thm:Existenz}. For the stochastic integral in $E_3$ we use the same argument after applying the Burkholder-Davis-Gundy inequality later. Now let us begin with, for instance $E_1$, where an integration by parts yields
\begin{align*}
E_1 &= -2 \int_0^t \norm{v(s) - \tv^n(s)}_V^2 \ds + 2 \int_0^t \dualp{A\tv^n(s) - \iota_n A^n v^n(s)}{v(s)-\tv^n(s)} \ds.
\intertext{Moreover, the one-sided Lipschitz condition \eqref{A1} for the nonlinear drift part and the Lipschitz continuity of $B$ in equation \eqref{LipschitzB} yield}
E_2 &\leq L {\kern -0.1em}\int_0^t{\kern -0.3em} \norm{E^n(s)}_\mathcal{H}^2 \ds + 2 {\kern -0.1em}\int_0^t {\kern -0.6em} \phantom{\,}_{\mathcal{V}^\ast}\scp{\Phi\big(\tv^n(s),\tw^n(s)\big) - \iota_n\Phi^n \big(v^n(s),w^n(s)\big)}{E^n(s)}_\mathcal{V} \ds\\
E_4^j &\leq 2 c_{\theta_j} \tr (-A)^{\theta_j} Q_j \int_0^t \norm{E^n(s)}_\mathcal{H}^2 \ds\\
&\quad + 2 \int_0^t \norm{B_j \big(\tv^n(s),\tw^n(s)\big) \sqrt{Q_j} - \iota_n b_j^n\big(v^n(s),w^n(s)\big)\sqrt{Q_j^n} P_n}_{L_2(H)}^2 \ds.
\end{align*}
In the following, we estimate each of the integrands in the error terms for fixed $s$, thus we drop the time dependence in the notation.

\textbf{Step 2: (Approximation error of the Laplacian)} At first, we will prove that the error term coming from the linear part converges to $0$ if $v \in V$, i.\,e., we need only the guaranteed regularity of the variational solution. The rate of convergence is not uniform and given implicitly in terms of the solution $v$. Second, if we assume additional regularity on $v$, we deduce an explicit rate in terms of $n$. Let $\varphi \in V$, then
\[
\dualp{A\tv^n - \iota_n A^n v^n}{\varphi} = \dualp{A\tv^n}{\varphi} - \scp{\iota_n A^n v^n}{\varphi}
\]
since the linear interpolation is in $V$. In the first term an integration by parts yields
\begin{equation}\label{eq:IBP}
\dualp{A\tv^n}{\varphi} = - n\sum_{k=1}^n \big(v_k^n - v_{k-1}^n\big) ( \varphi(\tfrac{k}{n}) - \varphi ( \tfrac{k-1}{n})).
\end{equation}
In the second term we use the summation by parts formula \eqref{SBP}, hence 
\begin{align}
\scp{\iota_n A^n v^n}{\varphi} &= \sum_{k=1}^n \int_{\frac{k-1}{n}}^{\frac{k}{n}} \big(A^n v^n \big)_k (n\xi-k+1) \varphi(\xi) + \big(A^n v^n \big)_{k-1} (k-n\xi) \varphi(\xi) \dxi \notag\\
&= \sum_{k=1}^n \big(A^n v^n \big)_k g_k + \big(A^n v^n \big)_{k-1} \bar{g}_{k-1}\notag\\
&= - n^2 \sum_{k=1}^n (v_k^n - v_{k-1}^n) \big( (g_k - g_{k-1}) + (\bar{g}_k - \bar{g}_{k-1})\big),\label{eq:SBP}
\end{align}
where we denote the integrals by $g_k$ and $\bar{g}_{k-1}$, respectively. Note that the boundary terms vanish because of the boundary conditions for $v^n$. 
The next step is to replace $\varphi$ by $E^n$ but since the computations differ for $v$ and the piecewise linear $\tv^n$ we split these terms and obtain the following lemmas.
\begin{lem}\label{lem:ErrorLaplaceVn}
For all $n \in \N$ it holds that
\[
\dualp{A\tv^n - \iota_n A^n v^n}{\tv^n} \leq 0.
\]
\end{lem}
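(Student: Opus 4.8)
The plan is to evaluate $\dualp{A\tv^n - \iota_n A^n v^n}{\tv^n}$ in closed form by specializing the two identities \eqref{eq:IBP} and \eqref{eq:SBP} to $\varphi = \tv^n$, and then to recognize the result as a manifestly negative semidefinite discrete quadratic form in the increments $\delta_k \df v_k^n - v_{k-1}^n$. Since the piecewise linear interpolant satisfies $\tv^n\big(\tfrac kn\big) = v_k^n$, formula \eqref{eq:IBP} gives immediately
\[
\dualp{A\tv^n}{\tv^n} = -n\sum_{k=1}^n \delta_k^2 = -\norm{\tv^n}_V^2 .
\]
For the second term I would compute the coefficients $g_k$ and $\bar g_{k-1}$ appearing in \eqref{eq:SBP} with $\varphi = \tv^n$: on each subinterval $[\tfrac{k-1}{n},\tfrac kn]$ the integrand is a product of affine functions, and the elementary integrals are $g_k = \tfrac1n\big(\tfrac13 v_k^n + \tfrac16 v_{k-1}^n\big)$ and $\bar g_{k-1} = \tfrac1n\big(\tfrac16 v_k^n + \tfrac13 v_{k-1}^n\big)$ (these are the linear finite-element mass weights). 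Hence $(g_k - g_{k-1}) + (\bar g_k - \bar g_{k-1}) = \tfrac1n\big(\tfrac23\delta_k + \tfrac16\delta_{k+1} + \tfrac16\delta_{k-1}\big)$, and \eqref{eq:SBP} becomes
\[
\scp{\iota_n A^n v^n}{\tv^n} = -n\sum_k \delta_k\Big(\tfrac23\delta_k + \tfrac16\delta_{k+1} + \tfrac16\delta_{k-1}\Big).
\]

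Subtracting the two displays gives $\dualp{A\tv^n - \iota_n A^n v^n}{\tv^n} = \tfrac n6\sum_k \delta_k\big(\delta_{k+1} - 2\delta_k + \delta_{k-1}\big)$, and a shift of the summation index (legitimate because $v^n \in V_n$ forces the boundary increments $\delta_1 = \delta_n = 0$) turns this discrete analogue of $\int u\,u''$ into
\[
\dualp{A\tv^n - \iota_n A^n v^n}{\tv^n} = -\tfrac n6\sum_k \big(\delta_{k+1} - \delta_k\big)^2 \leq 0 ,
\]
which is the assertion. Equivalently, since $(A^n v^n)_k = n^2(\delta_{k+1} - \delta_k)$, the right-hand side equals $-\tfrac{1}{6n^3}\sum_k \big((A^n v^n)_k\big)^2$; the identity thus records that the discrepancy between the finite-difference and the piecewise-linear (mass-lumped) treatment of the Laplacian is exactly the mass-lumping defect, which is sign-definite.

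All of this is routine; the only point calling for a little care is the index bookkeeping at the endpoints $k=0$ and $k=n$, both when rewriting $\scp{\iota_n A^n v^n}{\tv^n}$ through the hat functions and in the final discrete summation by parts. This is harmless: the discrete Neumann conditions built into $V_n$, namely $v_1^n - v_0^n = v_n^n - v_{n-1}^n = 0$, give $(A^n v^n)_0 = (A^n v^n)_n = 0$, so no boundary terms survive and every shifted index $\delta_{k\pm 1}$ occurring in the interior sums stays in range. There is no conceptual obstacle beyond carrying out these two short computations.
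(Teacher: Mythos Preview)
Your proposal is correct. The computation of $g_k$ and $\bar g_{k-1}$ and the resulting quadratic form in the increments $\delta_k$ is exactly what the paper does. The only divergence is in the very last step: the paper simply applies Cauchy's inequality $\delta_k\delta_{k\pm 1}\le\tfrac12(\delta_k^2+\delta_{k\pm1}^2)$ to the two cross-term sums, which bounds them by $\tfrac{n}{3}\sum_k\delta_k^2$ and hence makes the whole expression $\le 0$, without ever invoking $\delta_1=\delta_n=0$. You instead perform a discrete summation by parts, using the Neumann boundary increments $\delta_1=\delta_n=0$, to obtain the \emph{exact} identity $\dualp{A\tv^n - \iota_n A^n v^n}{\tv^n} = -\tfrac{n}{6}\sum_k(\delta_{k+1}-\delta_k)^2$. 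Your route is a touch longer but yields more: an equality rather than an inequality, together with the interpretation as the finite-element mass-lumping defect $-\tfrac{1}{6n^3}\sum_k\big((A^n v^n)_k\big)^2$. The paper's route is marginally shorter and more robust in that it does not rely on the vanishing boundary increments.
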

\begin{proof}
Since $\tv^n$ is piecewise linear we can compute $g_k$ and $\bar{g}_k$, which are
\[
g_k = \frac{1}{3n} v_k^n + \frac{1}{6n} v_{k-1}^n \quad \text{and} \quad \bar{g}_k = \frac{1}{6n} v_{k+1}^n + \frac{1}{3n} v_k^n.
\]
Obviously, we also have $\tv^n \big(\tfrac{k}{n}\big) = v_k^n$ and $\tv^n\big(\tfrac{k-1}{n}\big) = v_{k-1}^n$. Equations \eqref{eq:IBP} and \eqref{eq:SBP} now read as
\begin{align*}
&\dualp{A\tv^n - \iota_n A^n v^n}{\tv^n} = - n\sum_{k=1}^n (v_k^n - v_{k-1}^n)^2 + \frac{2n}{3} \sum_{k=1}^n (v_k^n - v_{k-1}^n)^2\\
&\qquad+ \frac{n}{6} \sum_{k=2}^n (v_k^n - v_{k-1}^n)(v_{k-1}^n-v_{k-2}^n) + \frac{n}{6} \sum_{k=1}^{n-1} (v_k^n - v_{k-1}^n)(v_{k+1}^n-v_k^n).
\end{align*}
With Cauchy's inequality we can bound the latter terms by the former ones, thus the statement is proven.
\end{proof}
\begin{lem}\label{lem:ErrorLaplacePhi}
Let $\varphi \in V$, then
\[
\dualp{A\tv^n - \iota_n A^n v^n}{\varphi} \leq 2 \norm{\tv^n}_V \cdot I_n(\varphi),
\]
where
\[ 
I_n(\varphi) \df \Big( \sum_{k=1}^n \int_{\frac{k-1}{n}}^{\frac{k}{n}} \Big(\varphi'(\xi) - \varphi' \big(\xi \pm \tfrac{1}{n}\big) \Big)^2 \dxi \Big)^\frac12.
\]
\end{lem}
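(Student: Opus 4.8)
The plan is to start from the combined formula \eqref{eq:IBP}--\eqref{eq:SBP} for $\dualp{A\tv^n - \iota_n A^n v^n}{\varphi}$, rewrite it as a single sum over $k$ of the increments $n(v_k^n-v_{k-1}^n)$ multiplied by an explicit ``defect'' coefficient that measures how far the test function $\varphi$ is from being piecewise linear on the grid. Concretely, the first term contributes $-\sum_k n(v_k^n-v_{k-1}^n)\bigl(\varphi(\tfrac kn)-\varphi(\tfrac{k-1}{n})\bigr)$ and the second contributes $+\sum_k n^2(v_k^n-v_{k-1}^n)\bigl((g_k-g_{k-1})+(\bar g_k-\bar g_{k-1})\bigr)$. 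Writing $\varphi(\tfrac kn)-\varphi(\tfrac{k-1}{n}) = \int_{(k-1)/n}^{k/n}\varphi'(\xi)\dxi$ and expressing the $g_k,\bar g_k$ integrals likewise in terms of $\varphi'$, I would combine the two into $\sum_k n(v_k^n-v_{k-1}^n)\,R_k(\varphi)$, where each $R_k(\varphi)$ is a difference of averages of $\varphi'$ over adjacent subintervals — i.e. $R_k(\varphi)$ is controlled by $\int (\varphi'(\zeta)-\varphi'(\zeta\pm\tfrac1n))^2\dzeta$ on the relevant cells.

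The next step is a Cauchy--Schwarz split: bound $\bigl|\sum_k n(v_k^n-v_{k-1}^n)R_k(\varphi)\bigr|$ by $\bigl(n\sum_k (v_k^n-v_{k-1}^n)^2\bigr)^{1/2}\bigl(\sum_k \tfrac1n\,(n R_k(\varphi))^2\bigr)^{1/2}$. The first factor is exactly $\norm{\tv^n}_V$ by the identity $\norm{\tv}_V^2 = n\sum_k (v_k-v_{k-1})^2$ established in the proof of Proposition \ref{propAPriori}. For the second factor I would show that $\tfrac1n (n R_k(\varphi))^2 \le C\int_{J_k}(\varphi'(\zeta)-\varphi'(\zeta\pm\tfrac1n))^2\dzeta$ for an appropriate union of cells $J_k$ near the $k$-th node, using that $R_k$ is built from differences of cell-averages of $\varphi'$ and Jensen's inequality (a cell-average difference is dominated by the $L^2$ oscillation of $\varphi'$ across a shift of width $\tfrac1n$). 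Summing over $k$ and absorbing the bounded overlap multiplicity into the constant gives $\sum_k \tfrac1n (nR_k(\varphi))^2 \le C\, I_n(\varphi)^2$, and tracking the numerical constant through the coefficients $\tfrac13,\tfrac16$ appearing in $g_k,\bar g_k$ yields the stated constant $2$ (or at worst something that can be arranged to be $2$ by the $\pm$ freedom in the definition of $I_n$).

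The main obstacle I anticipate is the bookkeeping in the second step: the coefficient $R_k(\varphi)$ mixes $\varphi'$-averages over the cells $[\tfrac{k-1}{n},\tfrac kn]$, $[\tfrac{k-2}{n},\tfrac{k-1}{n}]$ and $[\tfrac kn,\tfrac{k+1}{n}]$ with different weights, and one must carefully reorganize these into genuine shift-differences $\varphi'(\zeta)-\varphi'(\zeta\pm\tfrac1n)$ rather than generic second differences, so that the right-hand side is exactly $I_n(\varphi)$ and not some larger quantity; the sign ambiguity $\pm\tfrac1n$ in $I_n$ is precisely what gives the slack to handle the two boundary-adjacent terms. The telescoping/reindexing near $k=1$ and $k=n$ needs the boundary conditions $v_1^n-v_0^n = v_n^n-v_{n-1}^n = 0$ (already noted after \eqref{eq:SBP}), which kill the stray boundary contributions so that the clean Cauchy--Schwarz bound goes through without correction terms.
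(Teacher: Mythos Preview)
Your proposal is correct and follows essentially the same route as the paper: write the pairing as $\sum_k n(v_k^n-v_{k-1}^n)\,R_k(\varphi)$, apply Cauchy--Schwarz to split off $\norm{\tv^n}_V$, and bound each $R_k(\varphi)^2$ by the shift-difference integral of $\varphi'$ via Jensen. The paper's execution of the step you flag as the main obstacle is to collapse the three cells into one by shifting the $g_{k-1}$- and $\bar g_k$-integrals onto $[\tfrac{k-1}{n},\tfrac{k}{n}]$ and using $(n\xi-k+1)+(k-n\xi)=1$, after which the fundamental theorem turns the four $\varphi$-values into two inner integrals of $\varphi'(\zeta)-\varphi'(\zeta\pm\tfrac1n)$; Jensen then gives $c_n(\varphi)^2\le 2n^{-3}\int_{(k-1)/n}^{k/n}(\varphi'(\zeta)-\varphi'(\zeta\pm\tfrac1n))^2\,\mathrm d\zeta$, so no overlap multiplicity appears and the constant comes out as $\sqrt{2}\le 2$.
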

\begin{proof}
The factor with $\varphi$ in equations \eqref{eq:IBP} and \eqref{eq:SBP} can be written as
\begin{align*}
&\tfrac{1}{n} \Big( \varphi\big(\tfrac{k}{n}\big) - \varphi\big(\tfrac{k-1}{n}\big)\Big) - (g_k - g_{k-1}) + (\bar{g}_k - \bar{g}_{k-1})\notag\\
&\quad= \int_{\frac{k-1}{n}}^{\frac{k}{n}} (n\zeta - k+1) \Big[ \big( \varphi \big(\tfrac{k}{n}\big) - \varphi(\zeta)\big) - \big( \varphi\big(\tfrac{k-1}{n}\big) - \varphi\big(\zeta - \tfrac{1}{n}\big)\big) \Big]\notag\\
&\qquad+ (k-n\zeta) \Big[\big( \varphi (\zeta) - \varphi\big(\tfrac{k-1}{n}\big)\big) - \big( \varphi\big(\zeta + \tfrac{1}{n}\big) - \varphi\big(\tfrac{k}{n}\big)\big) \Big] \dzeta \fd c_n(\varphi),\notag
\end{align*}
because $(n\zeta - k+1) + (k-n\zeta) = 1$ and the integrals appearing in $g_{k-1}$ and $\bar{g}_k$ are shifted to the same interval. Rewrite $c_n(\varphi)$ with the fundamental theorem of calculus and use Jensen's inequality to obtain
\begin{align}
c_n(\varphi)^2 &= \Bigg(\int_{\frac{k-1}{n}}^{\frac{k}{n}} (n \zeta -k+1) \int_{\zeta}^{\frac{k}{n}} \varphi'(\xi) - \varphi'\big(\xi-\tfrac{1}{n}\big) \dxi \dzeta\Bigg. \notag\\
&\qquad\phantom{=}+\Bigg. \int_{\frac{k-1}{n}}^{\frac{k}{n}} (k-n\zeta) \int^{\zeta}_{\frac{k-1}{n}} \varphi'(\xi) - \varphi' \big(\xi+\tfrac{1}{n}\big) \dxi \dzeta\Bigg)^2 \notag\\
&\leq 2 n^{-3} \int_{\frac{k-1}{n}}^{\frac{k}{n}} \Big(\varphi'(\xi) - \varphi' \big(\xi \pm \tfrac{1}{n}\big) \Big)^2 \dxi.\label{eq:ErrorLaplacePhi1}
\end{align}
All there remains to do is an application of the Cauchy-Schwarz inequality.
\end{proof}
\begin{lem}
Let $I_n(\varphi)$ be defined as in Lemma \ref{lem:ErrorLaplacePhi}.
\begin{enumerate}
\item If $\varphi \in V$, then $I_n(\varphi) \leq 4 \norm{\varphi}_V$ and, moreover, $\lim_{n \to \infty} I_n(\varphi) = 0$.
\item If $\varphi \in H^{1+\alpha}(0,1)$, the explicit rate of convergence is given by $I_n(\varphi) \leq 2^{\alpha+2} n^{-\alpha} \norm{\varphi}_{H^{1+\alpha}}$.
\end{enumerate}
\end{lem}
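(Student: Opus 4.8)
The plan is to recognise $I_n(\varphi)^2$ as, up to a fixed constant, the squared $L^2$-modulus of continuity of $g\df\varphi'$ at the scale $h=1/n$, and then to read off both assertions from elementary facts about such moduli. As a first step I would note that $\bigcup_{k=1}^n[\tfrac{k-1}{n},\tfrac kn]=[0,1]$ and that in each cell the shift $\zeta\pm\tfrac1n$ may be taken with the sign keeping it inside $[0,1]$ (in the two endpoint cells only one sign is admissible, but those cells may in any case be discarded, carrying the vanishing Neumann factor $v_1^n-v_0^n$, resp.\ $v_n^n-v_{n-1}^n$, in the sums \eqref{eq:IBP} and \eqref{eq:SBP}); this gives
\[
I_n(\varphi)^2\;\leq\;2\,\omega_g\!\big(\tfrac1n\big),\qquad\text{where}\quad\omega_g(h)\df\int_0^{1-h}\abs{g(\xi+h)-g(\xi)}^2\dxi .
\]
Everything then reduces to estimating $\omega_g(h)$ in terms of the regularity of $g$.

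For part (i) I would use $\abs{a-b}^2\leq2\abs a^2+2\abs b^2$ together with the fact that translation does not increase the $L^2$-mass, giving $\omega_g(h)\leq4\norm{g}_{L^2(0,1)}^2=4\norm{\varphi}_V^2$ and hence the stated uniform bound $I_n(\varphi)\leq4\norm{\varphi}_V$; this $n$-uniform majorant is exactly what will later permit passing $n\to\infty$ under the expectation in Theorem~\ref{MainTheorem} by dominated convergence, the exact solution satisfying $v\in\LtwoV$ almost surely by Theorem~\ref{thm:Existenz}. For the convergence $I_n(\varphi)\to0$ I would approximate $g$ in $L^2(0,1)$ by some $\psi\in C([0,1])$, use $\omega_g(h)^{1/2}\leq\omega_\psi(h)^{1/2}+2\norm{g-\psi}_{L^2}$, observe $\omega_\psi(h)\to0$ as $h\downarrow0$ by uniform continuity of $\psi$, and let first $n\to\infty$ and then $\psi\to g$.

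For part (ii) I would treat $\alpha\in(0,1)$ first (the case $\alpha\geq1$ being immediate from $\varphi'\in H^1$ and $\omega_g(h)\leq h^2\norm{\varphi''}_{L^2}^2$). Here $g=\varphi'\in H^\alpha(0,1)$ and its Gagliardo seminorm $[g]_{H^\alpha}^2=\iint_0^1\abs{g(\xi)-g(\zeta)}^2\abs{\xi-\zeta}^{-1-2\alpha}\dzeta\dxi$ is part of $\norm{\varphi}_{H^{1+\alpha}}$, so $[g]_{H^\alpha}\leq\norm{\varphi}_{H^{1+\alpha}}$. Starting from the averaging identity
\[
g(\xi+h)-g(\xi)=\tfrac1h\int_\xi^{\xi+h}\big[(g(\xi+h)-g(y))-(g(\xi)-g(y))\big]\dy ,
\]
I would apply Cauchy--Schwarz in $y$ and integrate over $\xi\in(0,1-h)$; in each of the two resulting double integrals the two evaluation points lie at distance $r\leq h$, so the integrand may be multiplied by $(h/r)^{1+2\alpha}\geq1$ and the double integral is dominated by $h^{1+2\alpha}[g]_{H^\alpha}^2$. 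This yields $\omega_g(h)\leq4h^{2\alpha}\norm{\varphi}_{H^{1+\alpha}}^2$, hence $I_n(\varphi)^2\leq8n^{-2\alpha}\norm{\varphi}_{H^{1+\alpha}}^2$ and in particular $I_n(\varphi)\leq2^{\alpha+2}n^{-\alpha}\norm{\varphi}_{H^{1+\alpha}}$.

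I expect no genuinely hard step: the one non-routine ingredient is the classical bound $\omega_g(h)\lesssim h^{2\alpha}[g]_{H^\alpha}^2$ via the averaging identity, and the only real care is needed in the reduction step — the $\pm$-shift convention near the endpoints — and in keeping the (deliberately non-sharp) constants so that they come out as the stated $4$ and $2^{\alpha+2}$.
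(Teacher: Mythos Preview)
Your argument is correct and close in spirit to the paper's, but the organisation and one key step differ. The paper proves (ii) first: starting from \eqref{eq:ErrorLaplacePhi1} it inserts an additional $\xi$-average over the same cell, splits $\varphi'(\zeta)-\varphi'(\zeta\pm\tfrac1n)$ through the intermediate point $\varphi'(\xi)$, and bounds the resulting double integral by the Sobolev--Slobodeckij seminorm --- essentially your averaging identity in different clothing. For (i), however, the paper does not argue via continuity of translations in $L^2$ as you do; instead it establishes the uniform bound $I_n(\varphi)\leq4\norm{\varphi}_V$, then approximates $\varphi$ in $V$ by some $\varphi_\eps\in C^{1+\alpha}$ and \emph{applies the already-proven part~(ii)} to $\varphi_\eps$ to force $I_n(\varphi_\eps)\to0$. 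Your route is slightly more self-contained (part~(i) does not depend on part~(ii)) and conceptually cleaner through the modulus-of-continuity abstraction $\omega_g$; the paper's route avoids stating $\omega_g$ explicitly but needs the detour through the higher-regularity estimate. Both arrive at the same constants, and your sharper intermediate bound $I_n(\varphi)\leq2\sqrt{2}\,\norm{\varphi}_V$ of course implies the paper's $4\norm{\varphi}_V$.
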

\begin{proof}
At first, we will consider the second assertion. Thus, let $\varphi \in C^{1+\alpha}([0,1])$ and denote by $\varphi'$ its first derivative. Recall the definition of $c_n(\varphi)$ in the proof above. An additional integral in $\zeta$ over the same interval yields another $n$, while we can insert $\varphi'(\zeta)$ into the square above. The distance $\abs{\zeta \pm \tfrac1n - \xi}$ is of course bounded by $\tfrac2n$, hence
\[
c_n(\varphi)^2 \leq 2^{2\alpha + 3} n^{-3 - 2\alpha} \int_{\frac{k-1}{n}}^{\frac{k}{n}} \int_{\frac{k-1}{n}}^{\frac{k}{n}} \frac{\big(\varphi'(\zeta) - \varphi'(\xi)\big)^2}{\abs{\zeta-\xi}^{2\alpha+1}} + \frac{\big( \varphi'(\xi) - \varphi'\big(\zeta \pm \tfrac{1}{n}\big) \big)^2}{\abs{\zeta \pm \tfrac{1}{n} - \xi}^{2\alpha+1}} \dzeta \dxi
\]
and we have shown \textit{ii.}, since the sum of the latter expression is uniformly bounded by the Sobolev-Slobodeckij semi-norm of $\varphi$ in $H^{1+\alpha}$.

In a second step, we prove that if $\varphi \in V$, the expression $I_n(\varphi)$ is bounded uniformly in $n$ and we can indeed approximate $\varphi$ by sufficiently smooth functions in $V$ to obtain the desired convergence. For this purpose, we estimate \eqref{eq:ErrorLaplacePhi1} simply by
\[
c_n(\varphi)^2\leq 8 n^{-3} \int_{\frac{k-1}{n}}^{\frac{k}{n}} \abs{\varphi'(\xi)}^2 + \abs{\varphi'\big(\xi \pm \tfrac{1}{n}\big)}^2 \dxi,
\]
hence $I_n(\varphi) \leq 4 \norm{\varphi}_V$. Now, let $\eps>0$ be given. Clearly, $C^{1+\alpha}([0,1]) \subset V$ densely for $\alpha>0$, therefore one can find $\varphi_\eps \in C^{1+\alpha}([0,1])$ such that $\norm{\varphi - \varphi_\eps}_V \leq \frac{\eps}{8}$. Furthermore, we can find $n_0 \in \N$ such that for all $n \geq n_0$,
\[
I_n(\varphi) \leq I_n(\varphi-\varphi_\eps) + I_n(\varphi_\eps) \leq 4 \norm{\varphi-\varphi_\eps}_V + 2^{\alpha+2} n^{-\alpha} \norm{\varphi_\eps}_{H^{1+\alpha}}< \frac{\eps}{2} + \frac{\eps}{2} = \eps.\qedhere
\]
\end{proof}
If we turn our focus back on the original error term $E_1$ and apply the previous lemmas, we have shown that
\begin{equation}\label{EstimateE1}
E_1 \leq - 2 \int_0^t \norm{v(s) - \tv^n(s)}_V^2 \ds + 2 \int_0^t \norm{\tv^n(s)}_V I_n\big(v(s)\big) \ds.
\end{equation}

\textbf{Step 3: (Approximation error of the nonlinearities)} The nonlinear terms in $E_2$ are estimated with the (local) Lipschitz conditions \eqref{A3} and \eqref{A4} together with the uniform a priori estimates on the approximating solutions $\tv^n$ and $\tw^n$. The error term in $E_2$ is
\[
\phantom{\,}_{\mathcal{V}^\ast}\scp{\Phi\big(\tv^n(s),\tw^n(s)\big) - \iota_n\Phi^n \big(v^n(s),w^n(s)\big)}{E^n(s)}_\mathcal{V}
\]
which consists of contributing parts of $\phi_1$ and $\phi_2$. Since the assumptions on $\phi_2$ are much stronger, we exemplarily estimate the one for $\phi_1$ in the following:
\begin{align*}
&\sum_{k=1}^n \int_{\frac{k-1}{n}}^{\frac{k}{n}} \Big[ \phi_1 \big(\xi, \tv^n(\xi), \tw^n(\xi) \big) \Big.\\
&\qquad\Big.- (n\xi - k + 1) \phi_1 \big( \tfrac{k}{n}, v^n_k, w^n_k \big) - (k - n\xi) \phi_1 \big(\tfrac{k-1}{n}, v^n_{k-1}, w^n_{k-1}\big)\Big] \big(v(\xi) - \tv^n(\xi) \big) \dxi\\
&\quad\leq \sum_{k=1}^n \int_{\frac{k-1}{n}}^{\frac{k}{n}} \Big[ \abs{ (n\xi - k + 1)}\cdot \abs{\phi_1 \big(\xi, \tv^n(\xi), \tw^n(\xi) \big) - \phi_1 \big( \tfrac{k}{n}, v^n_k, w^n_k \big)} \Big.\\
&\qquad\Big.+ \abs{k - n\xi} \cdot \abs{\phi_1 \big(\xi, \tv^n(\xi), \tw^n(\xi) \big) - \phi_1 \big(\tfrac{k-1}{n}, v^n_{k-1}, w^n_{k-1} \big)}\Big] \abs{v(\xi) - \tv^n(\xi)} \dxi\\
&\quad\leq \sum_{k=1}^n \int_{\frac{k-1}{n}}^{\frac{k}{n}} \tfrac{L}{2} \Big[ \big( 1 + \abs{v_k^n}^{m-1} + \abs{v_{k-1}^n}^{m-1}\big) \big( \tfrac1n + \abs{v_k^n-v_{k-1}^n}\big)\Big.\\
&\qquad\Big. + \abs{w_k^n - w_{k-1}^n} \Big] \abs{v(\xi) - \tv^n(\xi)} \dxi, 
\end{align*}
where we used \eqref{A3}, $\abs{\tv^n(\xi)} \leq \abs{v_k^n} + \abs{v_{k-1}^n}$ and that $\abs{\tv^n(\xi) - v_k^n} = (k-n\xi) \abs{v_k^n - v_{k-1}^n}$ as well as $\abs{\tv^n(\xi) - v_{k-1}^n} = (n\xi-k+1) \abs{v_k^n - v_{k-1}^n}$. Of course, the same holds for $\tw^n$. With Young's inequality and $\alpha^\ast$ from Proposition \ref{propAPriori} we can further estimate
\begin{align*}
&\leq \tfrac{\alpha^\ast}{p} n \sum_{k=1}^n \big(\tfrac{1}{n^2} + \abs{v_k^n - v_{k-1}^n}^2\big) \int_{\frac{k-1}{n}}^{\frac{k}{n}} \abs{v(\xi) - \tv^n(\xi)}^2 \dxi\\
&\quad + \tfrac{L^2 p}{8 \alpha^\ast n^2} \sum_{k=1}^n \big(1 + \abs{v_k^n}^{m-1}+ \abs{v_{k-1}^n}^{m-1}\big)^2 + \tfrac{L}{4n^2} \norm{\tw^n}_V^2 + \tfrac{L}{4} \norm{v-\tv^n}_H^2\\
&\leq \Big( \tfrac{\alpha^\ast}{p} \big( 1 + \norm{\tv^n}_V^2 \big) + \tfrac{L}{4}\Big) \norm{v-\tv^n}_H^2 + \tfrac{3L^2 p}{4 \alpha^\ast n} \Big(1 + l_{m+1}^n\big(v^n\big)\Big) + \tfrac{L}{4n^2} \norm{\tw^n}_V^2,
\end{align*}
since $2(m-1) \leq m+1$ for $m\leq 3$. We need to remark at this point, that the superlinear growth of $\phi_1$ in $v$ (of order $m$ in \eqref{A3}) is the reason for the importance of the exponential a priori estimate in Proposition \ref{propAPriori}. Compared to the Lipschitz part in $w$ there appears a nonconstant coefficient in front of the error and the Gronwall type argument relies on this exponential integrability.

$\phi_2$'s part of the error can be obtained in the same way as for $w$ above, hence
\begin{equation}\label{EstimateNonlinear}
\begin{split}
E_2 &\leq \int_0^t \Big( \tfrac{\alpha^\ast}{p} \big( 1 + \norm{\tv^n(s)}_V^2\big) + L\Big) \norm{E^n(s)}_\mathcal{H}^2 \ds\\
&\quad+ \tfrac{3 L^2 p}{4 \alpha^\ast n} \int_0^t 1 + l_{m+1}^n \big( v^n(s) \big) \ds + \tfrac{L}{n^2} \int_0^t 1 + \norm{\tv^n(s)}_V^2 + \norm{\tw^n(s)}_V^2 \ds.
\end{split}
\end{equation}

\textbf{Step 4: (Approximation of the covariance operator)} The It\^o correction terms contain a Hilbert-Schmidt norm which, in our case, is given by the $L^2$-norm of the associated kernels. Thus, we can write these parts of the error terms $E_4^j$ similarly to the one of the nonlinear part in Step 3. Recall that $P_n h = (\scp{h}{\sqrt{n}1_{[\frac{l-1}{n}, \frac{l}{n})}})_{1 \leq l \leq n}$.
\begin{align*}
&\norm{ B_j(\tv^n,\tw^n) \sqrt{Q_j} - \iota_n b_j^n(v^n,w^n)\sqrt{Q_j^n} P_n}^2_{L_2(H)}\\
&= \iint_0^1 \Bigg( b_j\big(\xi, \tv^n(\xi), \tw^n(\xi) \big) q_j(\xi,\zeta) - \sum_{k,l=1}^n 1_{[\frac{k-1}{n},\frac{k}{n})}(\xi)1_{[\frac{l-1}{n}, \frac{l}{n})}(\zeta)\Bigg.\\
& \Bigg. \cdot \Big[ (n\xi - k+1) b_j\big( \tfrac{k}{n}, v_k^n,w_k^n \big) q^n_{j,k,l} + (k-n\xi) b_j\big( \tfrac{k-1}{n}, v_{k-1}^n, w_{k-1}^n\big) q^n_{j,k-1,l} \Big]\Bigg)^2{\kern -0.5em} \dzeta \dxi.
\end{align*}
A closer look reveals that the error consists of parts $S_1^j$ contributed by the approximation of $b_j$ and $S_2^j$ by the approximation of the kernel $q_j$. Therefore, we split the norm square into two parts where the first one is
\begin{align*}
S_1^j &= 2 \sum_{k=1}^n \int_0^1 \int_{\frac{k-1}{n}}^{\frac{k}{n}} \Big( b_j\big(\xi, \tv^n(\xi), \tw^n(\xi)\big) - (n\xi - k+1) b_j\big( \tfrac{k}{n}, v_k^n, w_k^n \big) \Big.\\
&\quad \Big.- (k-n\xi) b_j\big( \tfrac{k-1}{n}, v_{k-1}^n, w^n_{k-1}\big) \Big)^2 q_j(\xi,\zeta)^2 \dzeta \dxi.
\end{align*}
Similarly to Step 3 for the nonlinear drift term, since $b$ is Lipschitz continuous,
\begin{align*}
&\Big( b_j\big(\xi, \tv^n(\xi), \tw^n(\xi)\big) - (n\xi - k+1) b_j\big( \tfrac{k}{n}, v_k^n, w_k^n \big) - (k-n\xi) b_j\big( \tfrac{k-1}{n}, v_{k-1}^n, w_{k-1}^n \big) \Big)^2\\
&\quad\leq \tfrac12 \big( \tfrac{1}{n^2} + \abs{v_k^n - v_{k-1}^n}^2 + \abs{w_k^n - w_{k-1}^n}^2\big)
\end{align*}
and the summation over $k$ yields
\[
S^j_1 \leq \tfrac{1}{n} \tr Q_j \big( 1 + \norm{\tv^n}_V^2 + \norm{\tw^n}_V^2 \big).
\]
The second error $S^j_2$ term is due to the approximation of the covariance kernel and given by
\begin{align*}
S_2^j &= 4\sum_{k,l=1}^n \int_{\frac{k-1}{n}}^{\frac{k}{n}} \int_{\frac{l-1}{n}}^{\frac{l}{n}} \Big( b_j\big( \tfrac{k}{n}, v_k^n, w_k^n\big) \big( q_j(\xi,\zeta) - q^n_{j,k,l}\big)\Big)^2\\
&\quad + \Big(b_j\big( \tfrac{k-1}{n}, v_{k-1}^n,w_{k-1}^n\big) \big( q_j(\xi,\zeta) - q^n_{j,k-1,l}\big)\Big)^2 \dzeta \dxi,
\end{align*}
where we already used $(n\xi - k+1) \leq 1$ as well as $(k-n\xi) \leq 1$. Exemplarily we do the estimate for the part with $q^n_{j,k,l}$ but note that the procedure remains the same in the one with $q^n_{j,k-1,l}$ except for a constant. Since $b_j$ is bounded, we obtain
\begin{align*}
S_2^j &\leq 4 \sum_{k,l=1}^n \int_{\frac{k-1}{n}}^{\frac{k}{n}} \int_{\frac{l-1}{n}}^{\frac{l}{n}} \big( q_j(\xi,\zeta) - q^n_{j,k,l} \big)^2 \dzeta \dxi\\
&= 4 \sum_{k,l=1}^n \int_{\frac{k-1}{n}}^{\frac{k}{n}} \int_{\frac{l-1}{n}}^{\frac{l}{n}} \Big( q_j(\xi,\zeta) - n^2 \int_{\frac{k-1}{n}}^{\frac{k}{n}} \int_{\frac{l-1}{n}}^{\frac{l}{n}} q_j(\rho,\eta) \,\mathrm{d}\rho \,\mathrm{d}\eta \Big)^2 \dzeta \dxi\\
&\leq 4 n^2 \sum_{k,l=1}^n \int_{\frac{k-1}{n}}^{\frac{k}{n}} \int_{\frac{l-1}{n}}^{\frac{l}{n}} \int_{\frac{k-1}{n}}^{\frac{k}{n}} \int_{\frac{l-1}{n}}^{\frac{l}{n}} \abs{q_j(\xi,\zeta) - q_j(\rho,\eta)}^2 \,\mathrm{d}\rho \, \mathrm{d}\eta \dzeta \dxi.
\intertext{Since $\abs{\xi-\rho} \leq \tfrac{1}{n}$ as well as $\abs{\zeta - \eta} \leq \tfrac{1}{n}$ we can expand this expression to the Sobolev-Slobodeckij semi-norm of $q_j$.}
&\leq \tfrac{16}{n} \iiiint_0^1 \frac{\abs{q_j(\xi,\zeta) - q_j(\rho,\eta)}^2}{\big( \abs{\xi-\rho}^2 + \abs{\zeta-\eta}^2\big)^{\frac32}} \,\mathrm{d}\rho \, \mathrm{d}\eta \dzeta \dxi \\
&\leq \tfrac{16}{n} \norm{q_j}_{H^{\frac12}((0,1)^2)}^2 \leq \tfrac{16}{n} \tr (-A)^{\theta_j}Q_j.
\end{align*}
In summary, we have shown that (the modified constants are due to the omitted terms)
\begin{equation}\label{EstimateCovariance}
\begin{split}
E_4^j &\leq 2c_{\theta_j} \tr (-A)^{\theta_j} Q_j \int_0^t \norm{E^n(s)}_\mathcal{H}^2 \ds + \tfrac{64 t}{n} \tr (-A)^{\theta_j} Q_j\\
&\quad+ \tfrac{2}{n} \tr Q_j \int_0^t \big(1 + \norm{\tv^n(s)}_V^2 + \norm{\tw^n(s)}_V^2 \big) \ds 
\end{split}
\end{equation}

\textbf{Step 5:} Control over the supremum

The four previous steps allow a first estimate nonuniform in $t$. For this purpose define the processes
\begin{align*}
L^n_p(t) &\df \alpha^\ast \big(1 + \norm{\tv^n(t)}_V^2\big) + 2p^2 \sum_j c_{\theta_j} \tr (-A)^{\theta_j} Q_j\\
\intertext{and}
K^n_p(t) &\df 2 \norm{\tv^n(t)}_V I_n\big(v(t)\big) + \frac{3 L^2 p^2}{4 \alpha^\ast n} \big(1 + l_{m+1}^n\big(v^n(t)\big) \big) \\
& + \frac{p^2}{n} \Big(L + 2 \sum_j \tr Q_j\Big) \big(1 + \norm{\tv^n(t)}_V^2 + \norm{\tw^n(t)}_V^2 \big) + \frac{64 p^2}{n} \sum_j \tr (-A)^{\theta_j} Q_j.
\end{align*}
Now we apply It\^o's formula with the function $f(x) = x^p$, $p\geq 1$ for the square of the $\mathcal{H}$-norm of the error $E^n(t)$.
\begin{equation}\label{ItoLp}
\mathrm{d}\norm{E^n(t)}_{\mathcal{H}}^{2p} = p \norm{E^n(t)}_{\mathcal{H}}^{2p-2} \, \mathrm{d} \norm{E^n(t)}_{\mathcal{H}}^2 + 2p(p-1) \norm{E^n(t)}_{\mathcal{H}}^{2p-4} \, \mathrm{d}\langle M\rangle_t,
\end{equation}
where the local martingale $M(t)$ is given by $E_3$, i.\,e.,
\begin{align}
\mathrm{d}M(t) &\df \scp{E^n(t)}{\vek{B_1\big(v(t),w(t)\big)\sqrt{Q_1} - \iota_n b_1^n\big(v^n(t),w^n(t)\big)\sqrt{Q_1^n} P_n}{B_2\big(v(t),w(t)\big)\sqrt{Q_2} - \iota_n b_2^n\big(v^n(t),w^n(t)\big)\sqrt{Q_2^n} P_n} \dwt},\notag\\
\intertext{and  quadratic variation bounded from above by}
\mathrm{d}\langle M \rangle_t &\leq \norm{E^n(t)}_{\mathcal{H}}^2 \sum_j \norm{B_j\big(v(t),w(t)\big) \sqrt{Q_j} - \iota_n b_j^n \big(v^n(t),w^n(t)\big) \sqrt{Q_j^n} P_n}_{L_2(H)}^2 \dt\notag\\
\begin{split}
&\leq 2 \sum_j c_{\theta_j} \tr (-A)^{\theta_j} Q_j \norm{E^n(t)}_\mathcal{H}^4 \dt + \tfrac{64}{n} \sum_j \tr (-A)^{\theta_j} Q_j \norm{E^n(t)}_\mathcal{H}^2 \dt\\
&\quad+ \tfrac{2}{n} \sum_j \tr Q_j \big(1 + \norm{\tv^n(t)}_V^2 + \norm{\tw^n(t)}_V^2 \big) \norm{E^n(t)}_\mathcal{H}^2 \dt,
\end{split}\label{estQuadVar}
\end{align}
which were essentially the estimates of $E_4^j$. Thus \eqref{ItoLp}, It\^o's product rule, \eqref{estQuadVar} and Steps 1--4 imply
\begin{align*}
&\sup_{t \in [0,T]} e^{-\int_0^t L^n_p(s)\ds} \norm{E^n(t)}_\mathcal{H}^{2p} \leq \norm{E^n(0)}_\mathcal{H}^{2p}+ \int_0^T e^{-\int_0^t L^n_p(s) \ds} K^n_p(t) \norm{E^n(t)}_{\mathcal{H}}^{2p-2} \dt\\
&\qquad + 2p \sup_{t \in [0,T]} \left|\int_0^t e^{-\int_0^s L^n_p(r)\dr} \norm{E^n(s)}_{\mathcal{H}}^{2p-2} \,\mathrm{d}M(s)\right|\\
&\quad \leq \norm{E^n(0)}_{\mathcal{H}}^2 + \tfrac{p-1}{p} \sup_{t \in [0,T]} e^{-\int_0^t L^n_p(s) \ds} \norm{E^n(t)}_{\mathcal{H}}^{2p} + \tfrac1p \Bigg( \int_0^T K^n_p(t) \dt \Bigg)^p\\
&\qquad + 2p \sup_{t \in [0,T]} \left|\int_0^t e^{-\int_0^s L^n_p(r)\dr} \norm{E^n(s)}_{\mathcal{H}}^{2p-2} \,\mathrm{d}M(s)\right|.
\end{align*}
The second inequality is due to Young's inequality and we can absorb the second summand by the left-hand side. Now, take the expectation and Burkholder-Davis-Gundy's inequality bounds the supremum of the stochastic integral from above by its quadratic variation; more precisely,
\begin{equation}\label{estproof1}
\begin{split}
&\EV{\sup_{t \in [0,T]} e^{-\int_0^t L^n_p(s)\ds} \norm{E^n(t)}_{\mathcal{H}}^{2p}} \leq  p\, \EV{\norm{E^n(0)}_{\mathcal{H}}^{2p}}\\
& + \EV{ \Big(\int_0^T K^n_p(t)\dt\Big)^p} + 4p\,\EV{ \Big( \int_0^T e^{-2 \int_0^t L^n_p(s)\ds} \norm{E^n(t)}_{\mathcal{H}}^{4p-4} \,\mathrm{d}\langle M\rangle_t\Big)^\frac12}.
\end{split}
\end{equation}
With the bound on the quadratic variation from \eqref{estQuadVar} and Young's inequality we can estimate the latter summand in terms of the left-hand side and $K^n_p$ as follows:
\begin{align*}
&4p\,\EV{ \Bigg( \int_0^T e^{-2 \int_0^t L^n_p(s)\ds} \norm{E^n(t)}_{\mathcal{H}}^{4p-4} \,\mathrm{d}\langle M\rangle_t\Bigg)^\frac12}\\
&\quad \leq \frac12 \EV{\sup_{t \in [0,T]} e^{- \int_0^t L_p^n(s) \ds} \norm{E^n(t)}_\mathcal{H}^{2p}}\\
&\qquad + \big(2p \sum_j c_{\theta_j} \tr (-A)^{\theta_j} Q_j\big)  \int_0^T \EV{\sup_{s \in [0,t]} e^{- \int_0^s L_p^n(r) \dr} \norm{E^n(s)}_\mathcal{H}^{2p}} \dt\\
&\qquad + 2 (16p-8)^{2p-1} \EV{ \Big( \int_0^T K^n_p(t) \dt \Big)^p}.
\end{align*}
Thus, we can apply Gronwall's inequality for any $p \geq 1$ to obtain
\begin{equation}\label{EstimateSup}
\begin{split}
&\EV{\sup_{t \in [0,T]} e^{-\int_0^t L^n_p(s)\ds} \norm{E^n(t)}_{\mathcal{H}}^{2p}}\\
&\quad\leq e^{C_1 T} \Bigg( 2p \EV{\norm{E^n(0)}_{\mathcal{H}}^{2p}} + C_2 \EV{\Big(\int_0^T K^n(t) \dt\Big)^p}\Bigg),
\end{split}
\end{equation}
with $C_1 = 4pT \sum_j c_{\theta_j} \tr (-A)^{\theta_j} Q_j$ and $C_2 = 4(16-8)^{2p-1} + 2$. This preliminary error estimate yields the desired one via H\"older's inequality provided the right-hand side of \eqref{EstimateSup} is finite since we can control the exponential by Proposition \ref{propAPriori}. Therefore, we have for $p\geq 1$,
\begin{align}
\EV{\sup_{t \in [0,T]} \norm{E^n(t)}_{\mathcal{H}}^p}^\frac1p &\leq \EV{\sup_{t \in [0,T]} e^{-\frac12 \int_0^t L^n_p(s)\ds} \norm{E^n(t)}_{\mathcal{H}}^p e^{\frac12 \int_0^T L^n_p(t)\dt}}^\frac1p\notag\\
&\leq \EV{\sup_{t \in [0,T]} e^{-\int_0^t L^n_p(s)\ds} \norm{E^n(t)}_{\mathcal{H}}^{2p}}^{\frac{1}{2p}} \EV{e^{\int_0^T L^n_p(t)\dt}}^{\frac{1}{2p}}\notag\\
&\leq C_{\ref{propAPriori}}^{\frac{1}{2p}}\, e^{C_1T}\Bigg( 2p\EV{\norm{E^n(0)}_{\mathcal{H}}^{2p}} + C_2\EV{\Big(\int_0^T K_p^n(t) \dt\Big)^p}\Bigg)^{\frac{1}{2p}}\label{finalEstimate}
\end{align}
and it remains to study the convergence of $K^n$ to $0$. For this purpose, we fix $p=1$ at first. Then, it follows by Propositions \ref{propAPriori} and \ref{prop:wn} that
\[
\EV{\int_0^T K_1^n(t) \dt} \leq \frac{C}{n} + 2 \EV{\int_0^T \norm{\tv^n(t)}_V I_n\big(v(t)\big) \dt}
\]
Since by Lemma \ref{lem:ErrorLaplacePhi} $I_n(v(t)) \leq 4 \norm{v(t)}_V$ and both $v$ and $\tv^n$ are (uniformly) bounded in $\mathcal{L}^2 (\Omega,\algF,\PP; L^2([0,T],V))$, we can use Lebesgue's dominated convergence theorem to deduce
\[
\EV{\int_0^T K^n(t) \dt} \to 0, \quad \text{hence}\quad \EV{\sup_{t \in [0,T]} \norm{E^n(t)}_{\mathcal{H}}} \to 0 \quad \text{as } n \to \infty
\]
by Lemma \ref{lem:ErrorLaplacePhi}. Thus, the exponential moment estimates carry over to the limit $v$ and we can apply the dominated convergence theorem in cases $p>1$ which concludes the first assertion of Theorem \ref{MainTheorem}. If $v \in \mathcal{L}^{p^\ast} (\Omega,\algF,\PP; L^2([0,T],H^{1+\alpha}))$ for some $\alpha > 0$ and $p^\ast > 1$, then again by Lemma \ref{lem:ErrorLaplacePhi} and Proposition \ref{propAPriori} we get
\begin{align*}
&\EV{\Big(\int_0^T K^n(t) \dt\Big)^p} \leq \frac{C}{n^p} + C' \EV{\Big(\int_0^T \norm{\tv^n(t)}_V^2\dt \Big)^{\frac{p}{2}} \Big( \int_0^T I_n\big(v(t)\big)^2 \dt\Big)^{\frac{p}{2}}}\\
&\quad\leq \frac{C}{n^p} + C'\, \EV{\Big(\int_0^T \norm{\tv^n(t)}_V^2\dt \Big)^{\frac{p p^\ast}{2(p^\ast-p)}}}^{\frac{p^\ast-p}{p^\ast}} {\kern -0.9em} \EV{\Big( \int_0^T I_n\big(v(t)\big)^2 \dt\Big)^{\frac{p^\ast}{2}}}^{\frac{p}{p^\ast}}\\
&\quad\leq \frac{C}{n^p} + \frac{C''}{n^{\alpha p}} \,\EV{\Big( \int_0^T \norm{v(t)}_{H^{1+\alpha}}^2 \dt \Big)^{\frac{p^\ast}{2}}}^{\frac{p}{p^\ast}},
\end{align*}
hence the second assertion is proven for all $p < p^\ast$.\qed

\section{Applications}

As an application for our results in Theorem \ref{MainTheorem}, we consider the spatially extended FitzHugh-Nagumo system with noise. Originally, this was stated by FitzHugh \cite{FitzHugh1} as a system of ODEs simplifying the famous Hodgkin-Huxley model \cite{HodgkinHuxley} for the generation of an action potential in a neuron in terms of a voltage variable $v$ and a so-called recovery variable $w$. Its spatially extended version is a model for the propagation of the action potential in the axon of a neuron. See, e.\,g., the monographs \cite{Ermentrout, Keener} for more details on the deterministic case. Now consider this system subject to external noise only in the voltage variable $v$. Together with the original parameters from \cite{FitzHugh2} this reads as
\begin{equation}\label{eq:FHN}
\begin{split}
\dot{v}(t,\xi) &= \partial_\xi^2 v(t,\xi) + v(t,\xi) - \frac13 v(t,\xi)^3 - w(t,\xi) + \eta(t,\xi),\\
\dot{w}(t,\xi) &= 0.08 \big( v(t,\xi) - 0.8 w(t,\xi) + 0.7 \big), \quad t \geq 0, \xi \in (0,L)
\end{split}
\end{equation}
equipped with homogeneous Neumann boundary conditions in $0$ and $L$. The noise $\eta$ is modeled by $\sqrt{Q} W(t)$ with a cylindrical Wiener process $W$ on $H$ and $Q$ to be specified below. One can immediately see that \eqref{eq:FHN} is of the type \eqref{eq:SRDE}. The first mathematical rigorous analysis of this equation in the context of mild solutions can be found in \cite{BonaccorsiFHN}. It has been observed, e.\,g., in \cite{TuckwellFHN} that this system has traveling pulse solutions (Figure \ref{fig}) which may break down due to the influence of the external noise, hence there is no transmission of the signal from $0$ to $L$. This phenomenon is usually referred to as the propagation failure and one is interested in calculating its probability depending on the strength of the external noise; see \cite{TuckwellFHN} for a heuristic approach.

\begin{figure}[ht]
\centering
\includegraphics[scale=0.5]{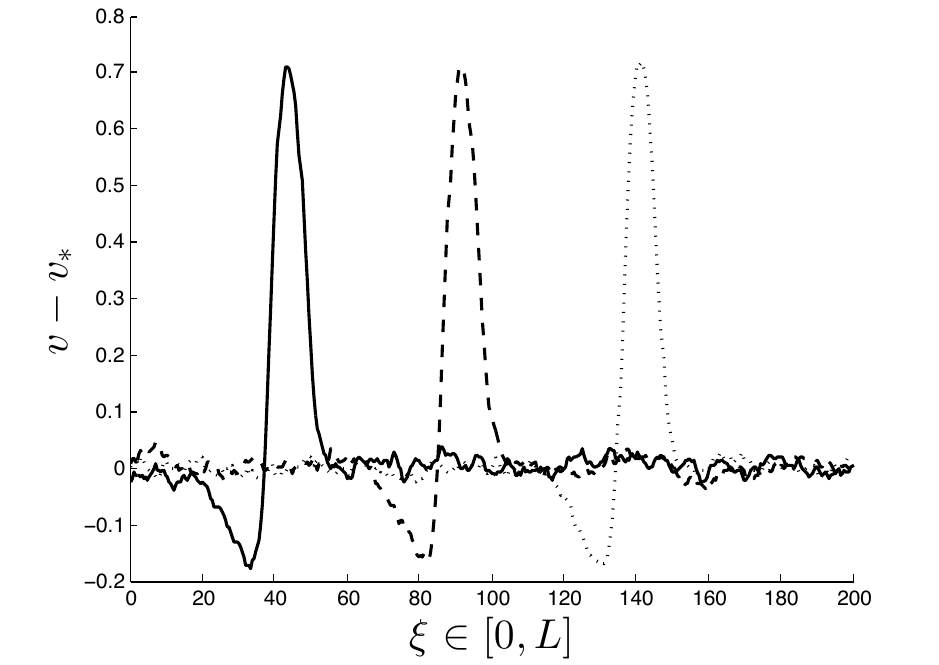}
\caption[Test]{A traveling pulse solution of \eqref{eq:FHN} obtained by the scheme \eqref{eq:NumSchemeFHN} propagating along $(0,L)$ at three different times (solid, dashed, dotted).}
\label{fig}
\end{figure}

\textbf{Numerical Approximation:} The numerical approximation in the study \cite{TuckwellFHN} is done via finite difference approximations in space and the Euler-Maruyama scheme in time. Set $\Delta x = L/n$ and $\Delta t = T/(4n^2)$ and denote by $(x_i)_i$,$(t_j)_j$ the equidistant grids corresponding to this. Approximating $v$ and $w$ in $(x_i, t_j)$ by $v_{i,j}$ and $w_{i,j}$ results in the scheme
\begin{equation}\label{eq:NumSchemeFHN}
\begin{split}
v_{i,j} &= v_{i,j-1} + \tfrac14\big( v_{i+1,j-1} - 2 v_{i,j-1} + v_{i-1,j-1} \big)\\
&\quad+ \phi_1 \big(x_i, v_{i,j-1}, w_{i,j-1}\big) \Delta t + 2\sigma \sqrt{n} N_{i,j},\\
w_{i,j} &= w_{i,j-1} + \phi_2 \big(x_i, v_{i,j-1}, w_{i,j-1}\big) \Delta t
\end{split} 
\end{equation}
with iid standard normal random variables $N_{i,j}$. Note that the author assumes that $\eta$ is space-time white noise, hence the approximated noise only has this simple structure.

\textbf{The Covariance Operator:} Although the noise is supposed to be white in time and space, we may use Theorem \ref{MainTheorem} to deduce convergence of the spatial approximation. Using properly scaled versions of the bump function
\[
\Psi(\xi, \zeta) =\begin{cases} \exp \Big(- \big(1-\xi^2-\zeta^2\big)^{-1}\Big), & \text{for }\xi^2 + \zeta^2 < 1,\\ 0, &\text{else,}\end{cases}
\]
one can construct a smooth kernel $q$ seeing only local interactions up to some distance $n^\ast$ and satisfying $q^n_{k,l} = n \delta_{k,l}$ for $n \leq n^\ast$, thus the numerical approximations using $Q$ and $I$, i.\,e., space-time white noise, as covariance operator do not differ.

\textbf{An Estimator for the Propagation Failure:} An appropriate estimator detecting the event of a propagation failure is given by the integral
\[
\Phi (v) \df \int_0^L v(\xi) - v_\ast \dxi,
\]
which significantly differs in cases with or without the traveling pulse based on the observation from Figure \ref{fig}. Here, $v_\ast \approx -1.1994$ is the voltage component of the unique real equilibrium point for the drift in \eqref{eq:FHN}. Given $\Phi$, the event of a propagation failure can be defined by 
\[
\Phi\big( v(t)\big) \leq \kappa \quad \text{for some } T_0 \leq t \leq T,
\]
for some appropriate threshold $\kappa > 0$ and some initializing time $T_0$. The quantity of interest is the probability
\[
\Prob{\min_{T_0 \leq t \leq T} \Phi\big(v(t)\big) \leq \kappa} \fd p_{Q,\kappa}
\]
of propagation failure depending on the noise covariance $Q$. Essentially we have to estimate the parameter of a Bernoulli distributed random variable. The sample average
\[
\hat{p}_{Q,\kappa}(\hat{v}) \df \frac1m \sum_{k=1}^m 1_{\{\min_{T_0 \leq t \leq T} \Phi(v^{(k)}(t)) \leq \kappa\}}
\]
based on $m$ iid copies $\hat{v} \df (v^{(k)})_{1 \leq k \leq m}$ is a natural estimator. Since the solution $v$ is not given explicitly, we can approximate $\hat{p}_{Q,\kappa}$ based on independent numerical observations $\hat{v}^n \df (\tv^{n,(k)})_{1 \leq k \leq m}$, as independent realizations of $\tv^n$ only. Besides the statistical Monte Carlo error, there appears additional uncertainty due to the approximation of the exact solution. Theorem \ref{MainTheorem} now allows us to quantify this.
\begin{cor}
Let $\hat{p}^n_{Q,\kappa} \df \hat{p}_{Q,\kappa}(\hat{v}^n)$ and $\eps >0$ be a priori given. Furthermore, assume that the solution $v \in \mathcal{L}^p( \Omega, \algF, \PP; L^2([0,T],H^2(0,1)))$ for some $p>2$. Then, a confidence interval for the estimation $\hat{p}^n_{Q,\kappa-\eps}$ of $p_{Q,\kappa}$ with confidence level $\alpha$ is given by $[ \hat{p}^n_{Q,\kappa-\eps} - \gamma, \hat{p}^n_{Q,\kappa-\eps} + \gamma]$ where
\[
\gamma = (\alpha m)^{-\frac12} \Big( 1 + 4\eps^{-2} C_{\ref{MainTheorem}} n^{-1}\Big)^\frac12.
\]
\end{cor}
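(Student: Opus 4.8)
The plan is to bound the mean-square error $\EV{(\hat p^n_{Q,\kappa-\eps}-p_{Q,\kappa})^2}$ and conclude by Markov's inequality applied to the square; the half-width $\gamma$, with $\gamma^2=(\alpha m)^{-1}(1+4\eps^{-2}Cn^{-1})$, is chosen precisely to absorb such an estimate. The first ingredient is a pathwise bound for the observable: since $\Phi$ is affine and $\abs{\Phi(f)-\Phi(g)}\leq\norm{f-g}_{L^1(0,1)}\leq\norm{f-g}_H$ by Cauchy--Schwarz,
\[
\sup_{T_0\leq t\leq T}\abs{\Phi\big(v(t)\big)-\Phi\big(\tv^n(t)\big)}\leq\sup_{t\in[0,T]}\norm{v(t)-\tv^n(t)}_H\leq\sup_{t\in[0,T]}\norm{E^n(t)}_{\mathcal H}.
\]
Since $H^2(0,1)=H^{1+1}$, the assumption $v\in\mathcal L^p(\Omega,\algF,\PP;L^2([0,T],H^2(0,1)))$ with $p>2$ puts us in the explicit-rate part of Theorem \ref{MainTheorem} with regularity index $1$ there; applying it with moment exponent $2$ (admissible as $2<p$) gives, for the smooth initial data of the model, $\EV{\sup_{t\in[0,T]}\norm{E^n(t)}_{\mathcal H}^2}\leq Cn^{-1}$. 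Hence $\EV{\sup_{T_0\leq t\leq T}\abs{\Phi(v(t))-\Phi(\tv^n(t))}^2}\leq Cn^{-1}$, and Markov's inequality gives $\PP[G_n^c]\leq C\eps^{-2}n^{-1}$ for $G_n\df\{\sup_{T_0\leq t\leq T}\abs{\Phi(v(t))-\Phi(\tv^n(t))}<\eps\}$.

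The second ingredient, and the reason for the $\eps$-shift of the threshold, is a comparison of failure indicators on $G_n$. If $\min_{T_0\leq t\leq T}\Phi(\tv^n(t))\leq\kappa-\eps$, attained at some $t_0\in[T_0,T]$ (both maps $t\mapsto\Phi(v(t))$ and $t\mapsto\Phi(\tv^n(t))$ are continuous), then on $G_n$ one has $\Phi(v(t_0))<\Phi(\tv^n(t_0))+\eps\leq\kappa$, so $\min_{T_0\leq t\leq T}\Phi(v(t))\leq\kappa$; symmetrically, on $G_n$ one also has $\{\min_t\Phi(v(t))\leq\kappa-2\eps\}\subseteq\{\min_t\Phi(\tv^n(t))\leq\kappa-\eps\}$. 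In particular, on $G_n$ a numerical detection of a propagation failure at level $\kappa-\eps$ forces a genuine one at level $\kappa$, so realisation by realisation $1_{\{\min_t\Phi(\tv^n(t))\leq\kappa-\eps\}}\leq1_{\{\min_t\Phi(v(t))\leq\kappa\}}+1_{G_n^c}$, the two indicators differing trivially by at most one off $G_n$.

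I would then couple $\hat p^n_{Q,\kappa-\eps}$ to the ``exact'' estimator $\hat p_{Q,\kappa}(\hat v)=\tfrac1m\sum_{k=1}^m1_{\{\min_t\Phi(v^{(k)}(t))\leq\kappa\}}$ built from the same $m$ samples and split
\[
\hat p^n_{Q,\kappa-\eps}-p_{Q,\kappa}=\big(\hat p^n_{Q,\kappa-\eps}-\hat p_{Q,\kappa}(\hat v)\big)+\big(\hat p_{Q,\kappa}(\hat v)-p_{Q,\kappa}\big).
\]
The second term is an average of $m$ independent centred Bernoulli variables, hence has second moment $\leq\tfrac1{4m}$; the first is dominated realisation by realisation by $\tfrac1m\sum_{k=1}^m1_{(G_n^{(k)})^c}$ through the inclusion above, with second moment $\leq\tfrac1m\PP[G_n^c]+\PP[G_n^c]^2$. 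Feeding in $\PP[G_n^c]\leq C\eps^{-2}n^{-1}$, using $(a+b)^2\leq2a^2+2b^2$, and absorbing constants (and the lower-order term $\PP[G_n^c]^2$) into $C$ gives $\EV{(\hat p^n_{Q,\kappa-\eps}-p_{Q,\kappa})^2}\leq\tfrac1m(1+4\eps^{-2}Cn^{-1})$. Markov's inequality applied to the square then yields $\PP[\abs{\hat p^n_{Q,\kappa-\eps}-p_{Q,\kappa}}>\gamma]\leq\gamma^{-2}\EV{(\hat p^n_{Q,\kappa-\eps}-p_{Q,\kappa})^2}\leq\alpha$ with $\gamma$ as stated, which is the claim.

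The main obstacle is the systematic bias: $\hat p^n_{Q,\kappa-\eps}$ estimates $\PP[\min_t\Phi(\tv^n(t))\leq\kappa-\eps]$ rather than $p_{Q,\kappa}$, and the inclusion on $G_n$ runs in only one direction, the reverse one implicating the window $\{\kappa-2\eps<\min_t\Phi(v(t))\leq\kappa\}$ whose probability is not driven to $0$ by refining the mesh. The $\eps$-shift is precisely what turns the ``false positive'' direction into an inclusion on $G_n$, leaving only ``false negative'' contributions, which lie in $(G_n^{(k)})^c$ together with that window; in the bistable FitzHugh--Nagumo regime underlying the application the window is (essentially) empty for small $\eps$, so the domination $\abs{\hat p^n_{Q,\kappa-\eps}-\hat p_{Q,\kappa}(\hat v)}\leq\tfrac1m\sum_k1_{(G_n^{(k)})^c}$ used above is lossless. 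Apart from this, the argument is elementary once Theorem \ref{MainTheorem} is in hand: all the stochastic PDE analysis is packed into that a priori estimate, and what remains are the Cauchy--Schwarz bound for $\Phi$, the binomial variance, and two applications of Markov's inequality.
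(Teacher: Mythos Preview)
Your approach is essentially the paper's: the same splitting at the exact estimator $\hat p_{Q,\kappa}(\hat v)$, Cauchy--Schwarz for $\Phi$, Theorem~\ref{MainTheorem} with $\alpha=1$ for the rate $n^{-1}$, and Chebyshev/Markov to produce the confidence interval (the paper splits the probability first and applies Chebyshev to each piece rather than bounding the mean-square error and applying Markov once, but this is only a cosmetic reorganisation). The paper tacitly uses the pointwise inequality $\lvert 1_{\{\min_t\Phi(v)\leq\kappa\}}-1_{\{\min_t\Phi(\tv^n)\leq\kappa-\eps\}}\rvert\leq 1_{\{\sup_t\lvert\Phi(v)-\Phi(\tv^n)\rvert>\eps\}}$ without discussing the one-sided inclusion you flag, so your treatment of the bias window is in fact more explicit than the original.
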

\begin{rem}
The additional regularity of the solution may be obtained in the context of mild solutions as in \cite{BonaccorsiFHN} if $Q$ is sufficiently regular, since the heat semigroup in the equation for $v$ is analytic and therefore maps $H$ to $D(A) = H^2(0,1)$. However, this needs further investigation.
\end{rem}
\begin{proof}
We can estimate the probability of $p_{Q,\kappa}$ being outside of an interval of size $\gamma$ around the estimator $\hat{p}^n_{Q,\kappa-\eps}$ by Chebychev's inequality with
\begin{align*}
&\Prob{\abs{p_{Q,\kappa} - \hat{p}^n_{Q,\kappa-\eps}}> \gamma} = \Prob{\abs{p_{Q,\kappa} - \hat{p}_{Q,\kappa} + \hat{p}_{Q,\kappa} - \hat{p}^n_{Q,\kappa-\eps}}> \gamma}\\
&\quad\leq \Prob{\abs{p_{Q,\kappa} - \hat{p}_{Q,\kappa}} > \tfrac{\gamma}{2}} + \Prob{\abs{\hat{p}_{Q,\kappa} - \hat{p}^n_{Q,\kappa-\eps}}> \tfrac{\gamma}{2}}\\
&\quad\leq \frac{1}{\gamma^2 m} + \Prob{ \sum_{k=1}^m 1_{\{ \sup_{t \in [0,T]} \abs{\Phi (v^{(k)}(t)) - \Phi( \tv^{n,(k)}(t))} > \eps\}} > m\tfrac{\gamma}{2}}\\
&\quad\leq \frac{1}{\gamma^2 m} + \frac{4}{\gamma^2 \eps^2 m} \EV{\sup_{t \in [0,T]} \Big(\Phi (v^{(k)}(t)) - \Phi( \tv^{n,(k)}(t))\Big)^2}.
\end{align*}
The additional uncertainty due to the approximation of the exact solution is the mean squared error of $\Phi \big( \tv^n(t)\big)$.
\begin{align*}
&\EV{ \sup_{t \in [0,T]} \Big( \Phi\big(v(t)\big) - \Phi\big(\tv^n(t)\big)\Big)^2} = \EV{\sup_{t \in [0,T]} \Big( \int_0^L v(t,\xi) - \tv^n(t,\xi) \dxi \Big)^2}\\
&\quad\leq L\,\EV{\sup_{t \in [0,T]} \norm{v(t) - \tv^n(t)}_H^2} \leq L\, \EV{\sup_{t \in [0,T]} \norm{E^n(t)}_{\mathcal{H}}^2}.
\end{align*}
Theorem \ref{MainTheorem} now implies the convergence rate of $n^{-1}$ and we set
\[
\alpha = \frac{1}{\gamma^2 m} + \frac{4L}{\gamma^2 \eps^2 m} C_{\ref{MainTheorem}} n^{-1}.\qedhere
\]
\end{proof}

\section*{Acknowledgment}
This work was supported by the BMBF, FKZ 01GQ1001B. Furthermore, the authors thank the two referees for their valuable suggestions that helped to improve the article.


\begin{thebibliography}{10}
\setlength{\itemsep}{-4pt}
\footnotesize

\bibitem{BonaccorsiFHN}
S.~Bonaccorsi and E.~Mastrogiacomo.
\newblock {Analysis of the Stochastic FitzHugh-Nagumo System}.
\newblock {\em Infin. Dimens. Anal. Quantum Probab. Relat. Top.},
  11(3):427--446, 2008.

\bibitem{CarelliProhl}
E.~Carelli and A.~Prohl.
\newblock {Rates of Convergence for Discretizations of the Stochastic Incompressible Navier-Stokes Equations}.
\newblock {\em SIAM J. Numer. Anal.}, 50(5):2467--2496, 2012.

\bibitem{Ermentrout}
G.~B. Ermentrout and D.~H. Terman.
\newblock {\em {Mathematical Foundations of Neuroscience}}, volume~35.
\newblock Springer, 2010.

\bibitem{FitzHugh1}
R.~FitzHugh.
\newblock {Impulses and Physiological States in Theoretical Models of Nerve
  Membrane}.
\newblock {\em Biophys. J.}, 1:445--466, 1961.

\bibitem{FitzHugh2}
R.~FitzHugh.
\newblock {Mathematical Models of Excitation and Propagation in Nerve}.
\newblock In {\em {Biological Engineering}}. McGrawHill, New York, 1969.

\bibitem{Gyongy99}
I.~Gy{\"o}ngy.
\newblock {Lattice Approximations for Stochastic Quasi-Linear Parabolic Partial
  Differential Equations Driven by Space-Time White Noise II}.
\newblock {\em Potential Anal.}, 11(1):1--37, 1999.

\bibitem{GyongyMillet05}
I.~Gy{\"o}ngy and A.~Millet.
\newblock {On Discretization Schemes for Stochastic Evolution Equations}.
\newblock {\em Potential Anal.}, 23(2):99--134, 2005.

\bibitem{GyongyMillet09}
I.~Gy{\"o}ngy and A.~Millet.
\newblock {Rate of Convergence of Space Time Approximations for Stochastic
  Evolution Equations}.
\newblock {\em Potential Anal.}, 30(1):29--64, 2009.

\bibitem{Hausenblas02}
E.~Hausenblas.
\newblock {Numerical Analysis of Semilinear Stochastic Evolution Equations in Banach Spaces.}
\newblock {\em J. Comput. Appl. Math.} 147:485--516, 2002. 

\bibitem{HodgkinHuxley}
A.~L. Hodgkin and A.~F. Huxley.
\newblock {A Quantitative Description of Membrane Current and its Application
  to Conduction and Excitation in Nerve}.
\newblock {\em J. Physiol.}, 117:500--544, 1952.

\bibitem{HJK2011}
M.~Hutzenthaler, A.~Jentzen and P.~E. Kloeden.
\newblock {Strong and Weak Divergence in Finite Time of Euler's Method for
  Stochastic Differential Equations With Non-Globally Lipschitz Continuous
  Coefficients}.
\newblock {\em Proc. R. Soc. A}, 467(2130):1563--1576, 2011.

\bibitem{HJK2012}
M.~Hutzenthaler, A.~Jentzen and P.~E. Kloeden.
\newblock {Strong Convergence of an Explicit Numerical Method for SDEs With
  Nonglobally Lipschitz Continuous Coefficients}.
\newblock {\em Ann. Appl. Probab.}, 22(4):1611--1641, 2012.

\bibitem{Jentzen}
A.~Jentzen.
\newblock {Pathwise Numerical Approximation of SPDEs with Additive Noise under
  Non-Global Lipschitz Coefficients}.
\newblock {\em Potential Anal.}, 31(4):375--404, 2009.

\bibitem{Keener}
J.~Keener and J.~Sneyd.
\newblock {\em {Mathematical Physiology: I: Cellular Physiology}}, volume~1.
\newblock Springer, 2008.

\bibitem{LordRougement}
G.~J.~Lord and J.~Rougement.
\newblock {A Numerical Scheme for Stochastic PDEs with Gevrey Regularity.}
\newblock {\em IMA J. Numer. Anal.} 24:587--604, 2004.

\bibitem{GinzburgLandau}
D.~Liu.
\newblock {Convergence of the Spectral Method for Stochastic Ginzburg-Landau
  Equation Driven by Space-Time White Noise}.
\newblock {\em Commun. Math. Sci.}, 1(2):361--375, 2003.

\bibitem{LiuLocallyMonotone}
W.~Liu and M.~R{\"o}ckner.
\newblock {SPDE in Hilbert Space with Locally Monotone Coefficients}.
\newblock {\em J. Funct. Anal.}, 259(11):2902--2922, 2010.

\bibitem{Pettersson05}
R.~Pettersson and M.~Signahl.
\newblock {Numerical Approximation for a White Noise Driven SPDE with Locally
  Bounded Drift}.
\newblock {\em Potential Anal.}, 22(4):375--393, 2005.

\bibitem{prevot}
C.~Pr{\'e}v{\^o}t and M.~R{\"o}ckner.
\newblock {\em {A Concise Course on Stochastic Partial Differential
  Equations}}, volume 1905 of {\em Lecture Notes in Mathematics}.
\newblock Springer, Berlin, 2007.

\bibitem{Shardlow99}
T.~Shardlow.
\newblock {Numerical Methods for Stochastic Parabolic PDEs}.
\newblock {\em Numer. Funct. Anal. Optim.} 20(1\&2):121--145, 1999.

\bibitem{Shardlow05}
T.~Shardlow.
\newblock {Numerical Simulation of Stochastic PDEs for Excitable Media}.
\newblock {\em J. Comput. Appl. Math.} 175:429--446, 2005.


\bibitem{TriebelAlt}
H.~Triebel.
\newblock {\em {Interpolation Theory, Function Spaces, Differential
  Operators}}, volume~18 of {\em North-Holland Mathematical Library}.
\newblock North-Holland Publishing Co., Amsterdam, 1978.

\bibitem{TuckwellFHN}
H.~C. Tuckwell.
\newblock {Analytical and Simulation Results for the Stochastic Spatial
  FitzHugh-Nagumo Model Neuron}.
\newblock {\em Neural Comput.}, 20(12):3003--3033, 2008.
\end{thebibliography}
\end{document}